\newcommand{\defin}[1]{%
\relax\ifmmode%
\textcolor{blue}{#1}%
\else\textcolor{blue}{\emph{#1}}%
\fi%
}
\definecolor{melon}{rgb}{0.99,0.72,0.67}
\definecolor{cornflower}{rgb}{0.66,0.77,0.95}
\definecolor{lightgreen}{rgb}{0.67, 0.99, 0.67}
\definecolor{aqua}{rgb}{0, 1, 1}
\newcommand{\DES}{{\mathsf{DES}}}
\newcommand{\des}{{\mathsf{des}}}
\newcommand{\inv}{{\mathsf{inv}}}
\newcommand{\exc}{{\mathsf{exc}}}
\newcommand{\maj}{{\mathsf{maj}}}
\newcommand{\DEX}{{\mathsf{DEX}}}
\newcommand{\ch}{{\mathrm{ch}}}
\newcommand{\ps}{{\mathrm{ps}_q}}
\newcommand{\psm}{{\mathrm{ps}_{q,m}}}
\newcommand{\rk}{{\mathrm{rk}}}
\newcommand{\Stab}{{\mathrm{Stab}}}
\newcommand{\cro}{{\mathsf{cro}}}
\renewcommand{\Dot}{{\mathsf{Dot}}}
\renewcommand{\S}{{\mathfrak{S}}}
\newcommand{\x}{{\mathbf{x}}}
\renewcommand{\L}{{\mathcal{L}}}
\newcommand{\Hilb}{{\mathsf{Hilb}}}
\newcommand{\grFrob}{{\mathsf{grFrob}}}
\newcommand{\qbin}[2]{{#1 \brack #2}_q} 
\newcommand{\floor}[1]{\lfloor #1 \rfloor}
\newcommand{\FY}{\operatorname{FY}}
\newtheorem{thm}{Theorem}[section]
\newtheorem{cor}[thm]{Corollary}
\newtheorem{conj}[thm]{Conjecture}
\newtheorem{prop}[thm]{Proposition}
\newtheorem{lem}[thm]{Lemma}
\newtheorem{prob}[thm]{Problem}
\theoremstyle{definition}
\newtheorem{defi}[thm]{Definition}
\newtheorem{exa}[thm]{Example}
\newtheorem{notation*}[thm]{Notation}
\theoremstyle{remark}
\newtheorem{rem}{Remark}[section]
\title[Equivariant gamma-positivity]{Equivariant gamma-positivity of matroid Chow rings}
\author{Hsin-Chieh Liao}
\address{Department of Mathematics, Washington University in St. Louis, One Brookings Drive, St. Louis, Missouri 63130, USA}
\email{liaoh@wustl.edu}
\date{}
\begin{document}

\maketitle

\begin{abstract}
In this paper, we prove that the Chow ring and augmented Chow ring of a matroid are equivariantly $\gamma$-positive under the action of any group of automorphisms. 
Our approach provides an explicit combinatorial interpretation of the coefficients in the equivariant $\gamma$-expansion, which is new even in the non-equivariant setting.
This result confirms a conjecture of Angarone, Nathanson, and Reiner, and extends the author’s previous work on the positivity of equivariant Charney--Davis quantities for matroids. 
Specializing our formulas to uniform matroids, we obtain representation-theoretic interpretations that extend the Schur-$\gamma$-positivity results of Shareshian and Wachs for Eulerian and binomial Eulerian quasisymmetric functions. 
Finally, we address a problem posed by Athanasiadis by giving a combinatorial interpretation of a $(p,q)$-analog of the $\gamma$-expansion of the binomial Eulerian polynomial.

\medskip

\noindent\textbf{AMS Classification 2020: 05B35, 05E18, 05E14, 05E05}

\end{abstract}

\dottedcontents{section}[0em]{}{0em}{0.5pc}
\dottedcontents{subsection}[0.5em]{}{0em}{0.5pc}

\setcounter{tocdepth}{1} 
\tableofcontents

\section*{Notation}
We fix some notations that we will use later. For any positive integer $n$ and integers $1\le a,b\le n$,
\begin{itemize}
    \item $[n]\coloneqq\{1,2,\ldots,n\}$ and $2^{[n]}$ is the collection of all subsets of $[n]$.
    \item $[a,b]\coloneqq \{a,a+1,\ldots,b-1,b\}$ if $a\le b$, and $[a,b]\coloneqq\emptyset$ if $a>b$.
    \item $[n]_t\coloneqq 1+t+\ldots+t^{n-1}$ for $n\ge 1$ and $[0]_t\coloneqq 0$.   
\end{itemize}
For a subset $S\subseteq [n]$, let $\Stab(S)$ denote the collection of subsets of $S$ containing no consecutive integers.

\section{Introduction}
Given a finite sequence, there are many features that one can ask whether the sequence possesses. A sequence $\{a_i\}_{i=0}^d$ or a polynomial $f(t)=\sum_{i=0}^d a_i t^i$ of degree $d$ is said to be \defin{palindromic} if 
\[
    a_j=a_{d-j}\quad \text{ for all } j=0,1,\ldots, d.
\]
It is said to be \defin{unimodal} if there is some $0\le j\le d$ such that 
\[
    a_0\le a_1\le\cdots\le a_j\ge\cdots\ge a_{d-1}\ge a_d.
\]
A palindromic polynomial $f(t)$ can be expressed uniquely as 
\[
    f(t)=\sum_{k=0}^{\lfloor\frac{d}{2}\rfloor}\gamma_{k}t^k(1+t)^{d-2k}.
\]
The above expression is known as the $\gamma$-expansion of $f$ and $(\gamma_1,\gamma_2,\ldots, \gamma_{\lfloor\frac{d}{2}\rfloor})$ is called the $\gamma$-vector of $f$.
We say the polynomial $f$ is \defin{$\gamma$-positive} (or \defin{$\gamma$-nonnegative}) if $\gamma_k\ge 0$ for all $k$. 
It is not hard to see that a polynomial being $\gamma$-positive implies that it is both palindromic and unimodal. 

There are several reasons why the $\gamma$-positivity of a polynomial has attracted significant attention. One is the abundance of well-studied combinatorial sequences that are $\gamma$-positive; in many cases, their $\gamma$-coefficients admit interesting combinatorial interpretations, and sometimes this positivity phenomenon together with these interpretations can be lifted to a richer algebraic or representation-theoretic setting. A classical example is the $\gamma$-positivity of the Eulerian polynomial, first studied by Foata and Strehl \cite{FoataStrehl1974}, which was later extended to the level of symmetric functions \cite{ShareshianWachs2010,ShareshianWachs2020}.
On the other hand, $\gamma$-positivity also appears naturally in geometric contexts. In particular, Gal conjectured that the $h$-polynomial of every flag simplicial sphere is $\gamma$-positive \cite[Conjecture 2.1.7]{Gal2005}.
In this framework, the Eulerian polynomial arises as the $h$-polynomial of the dual permutahedron, which is itself a flag simplicial sphere.
For more details about $\gamma$-positivity, we refer the readers to the survey \cite{AthanasiadisGamma} by Athanasiadis.

Let $M$ be a loopless matroid on the ground set $E$. The \defin{Chow ring of the matroid $M$} encodes the structure of the lattice of flats $\L(M)$ and is defined by
\[
    A(M)\coloneqq \mathbb{R}[x_{F}: F\in\L(M)\setminus\{\emptyset\}]/(I+J)
\]
where $I=(x_Fx_{F'}:F,F' \text{ not comparable})$ and $J=(\sum_{F:i\in x_F}x_F: i\in E)$. Similarly, the \defin{augmented Chow ring of $M$} is defined by
\[
    \widetilde{A}(M)\coloneqq \mathbb{R}[\{x_F:F\in\L(M)\setminus\{[n]\}\}\cup\{y_i:i\in E\}]/(\widetilde{I}+\widetilde{J})
\]   
where $\widetilde{I}=(x_Fx_{F'}:F,F' \text{ not comparable})+(y_ix_F: i\notin F)$ and $\widetilde{J}=(y_i-\sum_{F:i\notin F}x_F: i\in E)$. 
The ordinary Chow ring $A(M)$ can be recovered from the augmented one by quotienting out the ideal generated by the variables $y_i$.
Despite the seemingly complicated presentation, the Chow ring $A(M)$ has a nice $\mathbb{R}$-basis, which we refered to as the \defin{Feichtner-Yuzvinsky basis}
\[
    \FY(M)\coloneqq\left\{x_{F_1}^{a_1}x_{F_2}^{a_2}\ldots x_{F_\ell}^{a_\ell}:\substack{~\emptyset=F_0\subsetneq F_1\subsetneq F_2\subsetneq\ldots\subsetneq F_\ell \text{ for } 0\le \ell\le n \\
    1\le a_i\le \rk_M(F_i)-\rk_M(F_{i-1})-1}\right\}.
\]
It is the standard monomials obtained from the Gr\"{o}bner basis of the ideal $I+J$ found by Feichtner and Yuzvinsky \cite{FeichtnerYuzvinsky2004}. A similar Feichtner-Yuzvinsky basis for $\widetilde{A}(M)$, 
\[
	\widetilde{\FY}(M)\coloneqq\left\{x_{F_1}^{a_1}x_{F_2}^{a_2}\ldots x_{F_\ell}^{a_\ell}: \substack{\emptyset\subsetneq F_1\subsetneq F_2\subsetneq\ldots\subsetneq F_\ell \text{ for }0\le\ell\le n\\
    1\le a_1\le\rk_M(F_1),\\ 
    1\le a_i\le\rk_M(F_i)-\rk_M(F_{i-1})-1 \text{ for }i\ge 2}
    \right\},
\]
was found by the author \cite{Liao2023FPSAC,Liao2023One} and independently by Chris Eur (see \cite{Mastroeni2022Koszul}).
Using these bases, one can compute the Hilbert polynomials of the corresponding Chow rings and augmented Chow rings, which are now commonly referred to as the \defin{Chow polynomial} and the \defin{augmented Chow polynomials of a matroid}.

The Chow rings and augmented Chow rings of matroids play central roles in the resolution of the long-standing Rota-Welsh conjecture and Dowling-Wilson top-heavy conjecture, respectively. A key ingredient in both proofs is that these rings satisfy the so-called K\"{a}hler package (see \cite{AHK2018} for Chow rings and \cite{BHMPW2020+} for augmented Chow rings). 
This package includes a Poincar\'{e} duality, which implies the Chow and augmented Chow polynomials are palindromic, and the Hard Lefschetz Theorem, which implies these polynomials are unimodal. 
The Chow and augmented Chow polynomials of a matroid were proved to be $\gamma$-positive by Ferroni, Matherne, Stevens, and Vecchi \cite{Ferroni2024hilbert}, and independently by Wang \cite[p.33]{Ferroni2024hilbert}, using a recursive argument based on the semismall decomposition introduced in \cite{BHMPW2020}. 
However, an explicit expression or a combinatorial interpretation of the $\gamma_k$ coefficients remains unknown.
Moreover, these polynomials were conjectured to satisfy an even stronger positivity phenomenon. 
This was formulated by Huh and Stevens \cite[Conjecture 4.3.3]{StevenThesis}, and independently by Ferroni and Schr\"{o}ter \cite[Conjecture 8.18]{FerroniSchroter2024} (for Chow polynomials) as follows. 
\begin{conj}\label{conj: Real-rootedChow}
For every loopless matroid $M$, the Chow polynomial and the augmented Chow polynomial of $M$ have only real roots.  
\end{conj}

Taking into account group actions on matroids, Angarone, Nathanson, and Reiner \cite{ANR2023PermRepchow} extended several known and conjectural inequalities for Chow polynomials to the equivariant (representation-theoretic) and Burnside (permutation-representation) setting. 
In particular, they conjectured that the Chow ring $A(M)$, viewed as a graded $\mathbb{C}G$-modules, is equivariantly $\gamma$-positive (see Section \ref{sec:EqGamma} for the definition), and moreover equivariantly $PF_{\infty}$ \cite[Conjecture 4.4 (ii)]{ANR2023PermRepchow}, which is an equivariant analog of Conjecture \ref{conj: Real-rootedChow}.

This paper is the full version of the extended abstract \cite{Liao2024EqGamma} presented at FPSAC 2025.
We prove the equivariant $\gamma$-positivity conjecture of Angarone, Nathanson, and Reiner. Our approach further provides an explicit interpretation of the $\gamma$-coefficients, which is new even in the non-equivariant setting. We expect that this interpretation will provide further insight into Conjecture \ref{conj: Real-rootedChow} and its equivariant analog.

In Section \ref{sec:EqGamma}, we establish the equivariant $\gamma$-positivity conjecture. In Section \ref{sec:Uniform}, we apply our $\gamma$-positivity formulas to uniform matroids. 
From the perspective of matroid Chow rings, our results extend the combinatorial interpretations of the Charney-Davis quantity for uniform matroids studied by Hameister, Rao, and Simposon \cite{HameisterRaoSimpson2021}, as well as for general matroids by Liao \cite{Liao2023Two}. 
From the perspective of symmetric functions and representations of symmetric groups, our results generalize the Schur-$\gamma$-positivity of the Eulerian and binomial Eulerian quasisymmetric functions studied by Shareshian and Wachs \cite{ShareshianWachs2010, ShareshianWachs2020}, Linusson, Shareshian, and Wachs \cite{LinussonShareshianWachs2012}, and Athanasiadis \cite{AthanasiadisGamma}. 
In Section \ref{sec:AthanasiadisProblem}, we apply results from Section \ref{sec:Uniform} to address a problem raised by Athanasiadis \cite[Problem 2.5]{AthanasiadisGamma} concerning a $(p,q)$-analog of the $\gamma$-expansion of the binomial Eulerian polynomial. 
In Section \ref{sec: Braid}, we comment on applications of our equivariant $\gamma$-positivity results on braid matroids.
In Section \ref{sec: Developments}, we discuss recent developments related to this work that have appeared since the preliminary version was posted on \texttt{arXiv}.

In the course of preparing this paper, we learned that Stump \cite{StumpChow2024} independently obtained an interpretation of the (non-equivariant) $\gamma$-coefficients for Chow and augmented Chow polynomials of matroids. His approach uses a specialization of the \emph{Poincar\'{e} extended $\mathbf{ab}$-index} introduced in \cite{DorpalenMaglioneStump2023ab}, expressing the $\gamma$-coefficient as the number of maximal chains in $\L(M)$ with prescribed descent set under an $R$-labeling. His results agree with ours, as a consequence of Stanley \cite[Theorem 3.1]{StanleyRlabel1974}. 
In future work, we aim to further investigate the relationship between the Feichtner-Yuzvinsky basis and the Poincar\'{e} extended $\mathbf{ab}$-index and study the equivariant analog of the extended $\mathbf{ab}$-index.

\section{Group actions on posets}
Let $P$ be a finite poset with a unique minimal element $\hat{0}$ and a unique maximal element $\hat{1}$. If $P$ is graded of rank $n$ with rank function $\rk_P: P\longrightarrow\{0,1,\ldots,n\}$, then for $S\subseteq [n-1]$, the \defin{rank selected subposet} of $P$ is
\[
    P_S\coloneqq\{x\in P~:~\rk_P(x)\in S\}\cup\{\hat{0},\hat{1}\}.
\]
Consider a group $G$ of automorphisms of $P$ acting on the poset $P$. The action of $G$ preserves the rank of the elements in $P$; hence for any $S\subseteq [n-1]$, the group $G$ permutes the maximal chains in $P_S$. Let us denote $\alpha_P(S)$ the permutation representation of $G$ generated by the maximal chains in $P_S$. Consider the virtual representation
\[
    \beta_P(S)\coloneqq\sum_{T\subseteq S}(-1)^{|S|-|T|}\alpha_P(T),
\]
which by M\"{o}bius inversion, is equivalent to
\begin{equation} \label{eq: flagFtoH}
    \alpha_P(S)=\sum_{T\subseteq S}\beta_P(T).
\end{equation}
Here $\alpha_P(S)$ and $\beta_P(S)$ can be viewed as the equivariant analogs of the flag $f$-vector and flag $h$-vector of the poset $P$, respectively. See \cite[Section 3.13]{StanleyEC1} for more details.

\begin{thm}[Stanley \cite{Stanley1982GroupPoset}] \label{thm:BetaGenRep}
If $P$ is Cohen-Macaulay, then $\beta_P(S)$ is a genuine representation of $G$ and 
\[
    \beta_P(S)\cong_G \tilde{H}_{|S|-1}(P_S).
\]
\end{thm}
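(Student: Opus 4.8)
The plan is to realize $\beta_P(S)$ as the top reduced homology of an order complex, using the fact that rank-selection preserves the Cohen--Macaulay property. Fix $S\subseteq[n-1]$ with $|S|=k$ and let $\bar P_S:=\{x\in P:\rk_P(x)\in S\}$ be the proper part of $P_S$; as is standard, write $\tilde H_*(P_S)$ for the reduced simplicial homology, with coefficients in a field of characteristic zero, of the order complex $\Delta(\bar P_S)$. A maximal chain of $P_S$ is $\hat0<x_1<\cdots<x_k<\hat1$ with $\rk_P(x_1)<\cdots<\rk_P(x_k)$ ranging over $S$, so after deleting the bounds these chains are exactly the facets of $\Delta(\bar P_S)$, which is thus pure of dimension $k-1$; more generally, for $T\subseteq S$ the chains of $\bar P_S$ with rank set exactly $T$ are the $(|T|-1)$-faces of $\Delta(\bar P_S)$ of ``type'' $T$, and $G$ permutes them. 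Because every $g\in G$ preserves $\rk_P$, it carries each such chain to one of the same type and respects the order of its vertices; ordering the vertices of each face compatibly with the partial order of $P$, the group $G$ therefore acts by orientation-preserving simplicial maps, so that the reduced simplicial chain complex of $\Delta(\bar P_S)$ is, in each degree, a direct sum of permutation representations:
\[
    C_{j-1}\bigl(\Delta(\bar P_S)\bigr)\ \cong_G\ \bigoplus_{\substack{T\subseteq S\\ |T|=j}}\alpha_P(T)\qquad (0\le j\le k),
\]
where $C_{-1}\cong_G\alpha_P(\emptyset)$ is the trivial representation.

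Next I would invoke the key external ingredient: the order complex of $P$ is balanced by its rank function and is Cohen--Macaulay by hypothesis, so by the rank-selection theorem for balanced Cohen--Macaulay complexes (Baclawski, Stanley) the color-selected subcomplex $\Delta(\bar P_S)$ is again Cohen--Macaulay, necessarily of pure dimension $k-1$. Hence its reduced homology is concentrated in the top degree: $\tilde H_i\bigl(\Delta(\bar P_S)\bigr)=0$ for $i\ne k-1$.

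Finally I would run the Euler-characteristic (Hopf trace) argument in the Grothendieck group $R(G)$ of finite-dimensional representations of $G$. Comparing the alternating sums of the classes of the chain groups and of the homology groups of the complex above and using the vanishing just established gives, in $R(G)$,
\[
    (-1)^{k-1}\bigl[\tilde H_{k-1}\bigr]\ =\ \sum_{j=0}^{k}(-1)^{j-1}\!\!\sum_{\substack{T\subseteq S\\ |T|=j}}\!\![\alpha_P(T)]\ =\ \sum_{T\subseteq S}(-1)^{|T|-1}[\alpha_P(T)],
\]
so that $[\tilde H_{k-1}]=\sum_{T\subseteq S}(-1)^{|S|-|T|}[\alpha_P(T)]=[\beta_P(S)]$. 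Since over a characteristic-zero field a representation is determined up to isomorphism by its class in $R(G)$, and $\tilde H_{k-1}(P_S)$ is a genuine representation, this identity forces $\beta_P(S)$ to be genuine and $\beta_P(S)\cong_G\tilde H_{|S|-1}(P_S)$. I expect the one genuinely nontrivial step to be the rank-selection theorem for Cohen--Macaulay complexes invoked above; granting it, the remainder is routine, apart from the small but essential points that $G$ introduces no sign twist because it preserves ranks and that working in characteristic zero lets characters detect isomorphism (and makes the chain complex split). It is also worth checking the extreme cases $S=\emptyset$, where both sides are the trivial representation, and $S=[n-1]$, where $\tilde H_{n-2}$ of the full proper part appears.
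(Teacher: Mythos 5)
Your proof is correct. The paper does not prove Theorem~\ref{thm:BetaGenRep} at all --- it cites it to Stanley \cite{Stanley1982GroupPoset} --- and your argument (rank types give a $G$-equivariant decomposition of the chain groups with no sign twist because $G$ preserves ranks, rank-selection preserves Cohen--Macaulayness so homology concentrates in top degree, then Hopf trace in $R(G)$ plus semisimplicity in characteristic zero upgrades the virtual identity to an isomorphism) is precisely Stanley's original proof, with the key nontrivial input (the Baclawski--Stanley rank-selection theorem) correctly identified and quarantined.
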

Throughout this paper, the poset $P$ we care about is always the lattice of flats of a matroid. It is well known that the lattice of flats of a matroid is always Cohen-Macaulay.

\section{Equivariant $\gamma$-positivity}\label{sec:EqGamma}

Let $G$ be a finite group. For a $\mathbb{C}G$-module $M$, let $[M]$ be the isomorphism class of $\mathbb{C}G$-modules containing $M$. 

\begin{defi}
The Grothendieck ring $R_{\mathbb{C}}(G)$ of $\mathbb{C}G$-modules is a free abelian group having the transversal of isomorphism classes of simple $\mathbb{C}G$-modules $\{[S_i],\ldots,[S_{cc(G)}]\}$ as a $\mathbb{Z}$-basis, where $cc(G)$ is the number of conjugacy classes of $G$, with the addition and multiplication relations given by 
\[
    [S_i]+[S_j]\coloneqq [S_i\oplus S_j] \quad  \text{ and } \quad  [S_i]\cdot[S_j]\coloneqq[S_i\otimes_{\mathbb{C}} S_j]
\]
for $1\le i,j\le cc(G)$ and extended linearly over $\mathbb{C}$. Now every element $A$ in $R_{\mathbb{C}}(G)$ has a unique expression as $a=\sum_{i=1}^{cc(G)}a_i[S_i]$. We say $A$ is a \emph{genuine representation} of $G$ if $a_i\ge 0$ for all $i$, denoted by $A\ge_{R_{\mathbb{C}}(G)}0$.
\end{defi}

Given a graded $\mathbb{C}G$-module $V=\bigoplus_{i} V_i$, define the \defin{equivariant Hilbert series} of $V$ to be the formal power series 
\[
    \Hilb_{G}(V,t)=\sum_{i}[V_i] t^i\in R_{\mathbb{C}}(G)[[t]].
\]
 
\begin{defi}
Let $V=\bigoplus_{i=0}^d V_i$ be a finite dimensional graded $\mathbb{C}G$-module. 
We say that $V$ is \defin{($G$-)equivariant $\gamma$-positive} if its equivariant Hilbert series can be expressed as 
\[
    \Hilb_G(V,t)=\sum_{i=0}^d[V_i]t^i=\sum_{k=0}^{\lfloor\frac{d}{2}\rfloor}\gamma_k t^k(1+t)^{d-2k}
\]
and the uniquely defined coefficient $\gamma_k\in R_{\mathbb{C}}(G)$ is a class of a genuine representation of $G$ over $\mathbb{C}$ for all $k$, i.e. $\gamma_k\ge_{R_{\mathbb{C}}(G)}0$ for all $k$.    
\end{defi}

Let $M$ be a loopless matroid of rank $r$ on the ground set $E$ with the lattice of flats $\L(M)$. Write $A(M)_\mathbb{C}\coloneqq A(M)\otimes_{\mathbb{R}}\mathbb{C}$ and $\widetilde{A}(M)_\mathbb{C}\coloneqq \widetilde{A}(M)\otimes_{\mathbb{R}}\mathbb{C}$. 

From now on, let $G$ be a group of automorphisms of the matroid $M$.
The induced action of $G$ on the Chow ring $A(M)_{\mathbb{C}}=\bigoplus_{i=0}^{r-1}A_{\mathbb{C}}^i$ endows each graded piece $A_{\mathbb{C}}^i$ a $\mathbb{C}G$-module structure. The same holds for the augmented Chow ring $\widetilde{A}(M)_{\mathbb{C}}=\bigoplus_{i=0}^r\widetilde{A}^i_{\mathbb{C}}$. 
The following conjecture was proposed by Angarone, Nathanson, and Reiner.

\begin{conj}[{\cite[Conjecture 5.2] {ANR2023PermRepchow}}] \label{conj:Equiv}
For any (loopless) matroid $M$ of rank $r$ and any group $G$ of automorphisms of $M$, the matroid Chow ring $A(M)_{\mathbb{C}}=\bigoplus_{i=0}^{r-1}A^i_{\mathbb{C}}$ is ($G$-)equivariant $\gamma$-positive. 
\end{conj}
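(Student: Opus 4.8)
The plan is to read the equivariant Hilbert series of $A(M)_{\mathbb C}$ straight off the Feichtner–Yuzvinsky basis, expand it in Stanley's $\beta$-representations, and then recognize a $\gamma$-expansion. Since every matroid automorphism $g\in G$ permutes the flats preserving rank and containment, the induced ring automorphism of $A(M)_{\mathbb C}$ permutes the monomials of $FY(M)$, sending $x_{F_1}^{a_1}\cdots x_{F_\ell}^{a_\ell}$ to $x_{gF_1}^{a_1}\cdots x_{gF_\ell}^{a_\ell}$ (the exponent bounds depend only on the $G$-invariant rank gaps). Hence each graded piece $A^i_{\mathbb C}$ is a permutation $\mathbb CG$-module. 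Grouping the basis monomials by the rank set $S=\{\rk_M F_1<\cdots\}$ of the underlying flag of proper flats — splitting off the case $F_\ell=E$, which contributes the $G$-fixed top element and multiplies the weight by an extra $(1+t+\cdots+t^{\,r-\max S-1})$ — I would obtain
\[
\Hilb_G(A(M)_{\mathbb C},t)\;=\;\sum_{S\subseteq[r-1]}\alpha_{\L(M)}(S)\,\tilde w_S(t),\qquad
\tilde w_S(t)=\Bigl(\prod_{i=1}^{m}\bigl(t+\cdots+t^{\,s_i-s_{i-1}-1}\bigr)\Bigr)\bigl(1+\cdots+t^{\,r-s_m-1}\bigr),
\]
where $S=\{s_1<\cdots<s_m\}$, $s_0=0$. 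Here $\alpha_{\L(M)}(S)$ is exactly the permutation representation from Section 2: since $\L(M)$ is Cohen–Macaulay, maximal chains of $\L(M)_S$ are pure and correspond bijectively, after deleting the $G$-fixed endpoints, to flags of proper flats with rank set exactly $S$.

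Next I would apply the Möbius relation $\alpha_{\L(M)}(S)=\sum_{T\subseteq S}\beta_{\L(M)}(T)$ and swap the order of summation:
\[
\Hilb_G(A(M)_{\mathbb C},t)=\sum_{T\subseteq[r-1]}\beta_{\L(M)}(T)\,V_T(t),\qquad V_T(t):=\sum_{T\subseteq S\subseteq[r-1]}\tilde w_S(t).
\]
The combinatorial core is a closed form for $V_T(t)$. If $T$ contains $1$ or two consecutive integers, every $\tilde w_S$ with $S\supseteq T$ has a vanishing factor, so $V_T=0$; otherwise $T=\{t_1<\cdots<t_k\}\in\Stab(\{2,\dots,r-1\})$. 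In the latter case, adjoining a set $U$ of extra ranks makes $\tilde w_{T\cup U}$ factor as a product over the gaps of $T\cup\{0,r\}$, so $V_T$ factors as $\prod_{i}C_i$ over those gaps, each $C_i$ being the sum over how $U$ meets that gap. Using $\sum_{g\ge2}(t+\cdots+t^{g-1})x^g=\tfrac{tx^2}{(1-x)(1-tx)}$ and $\sum_{g\ge1}(1+\cdots+t^{g-1})x^g=\tfrac{x}{(1-x)(1-tx)}$, together with the collapse $1-\tfrac{tx^2}{(1-x)(1-tx)}=\tfrac{1-(1+t)x}{(1-x)(1-tx)}$, one gets a factor $t(1+t)^{d-2}$ for each interior gap of length $d$ and a factor $(1+t)^{d-1}$ for the final gap. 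As the gap lengths sum to $r$, this gives $V_T(t)=t^{k}(1+t)^{\,r-1-2k}$.

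Combining the two displays yields
\[
\Hilb_G(A(M)_{\mathbb C},t)=\sum_{k\ge0}\Bigl(\sum_{\substack{T\in\Stab(\{2,\dots,r-1\})\\ |T|=k}}\beta_{\L(M)}(T)\Bigr)\,t^{k}(1+t)^{\,r-1-2k},
\]
so $\gamma_k=\sum_{|T|=k}\beta_{\L(M)}(T)$ over sparse $T$. Since $\L(M)$ is Cohen–Macaulay, Theorem~\ref{thm:BetaGenRep} identifies each $\beta_{\L(M)}(T)$ with the genuine representation $\tilde H_{|T|-1}(\L(M)_T)$, whence $\gamma_k\ge_{R_{\mathbb C}(G)}0$, proving Conjecture~\ref{conj:Equiv}; the same argument run through $\widetilde{FY}(M)$ handles the augmented Chow ring. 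I expect the main obstacle to be the first step — pinning down the equivariant Hilbert series exactly, especially the bookkeeping of flags through the top flat $E$ so that the weights $\tilde w_S$ come out in the right normalization — since the $V_T$ computation in the third step is then a self-contained generating-function identity.
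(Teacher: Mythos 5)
Your proof is correct and follows the same overall strategy as the paper: read off $\Hilb_G(A(M)_{\mathbb C},t)$ from the Feichtner--Yuzvinsky permutation basis, grouping by rank set $S$ to get the multiplicity polynomials $\phi_{S,r}(t)$ (your $\tilde w_S$); M\"obius-invert to pass from $\alpha_{\L(M)}(S)$ to $\beta_{\L(M)}(T)$; verify the identity $\sum_{T\subseteq S\subseteq[r-1]}\phi_{S,r}(t)=t^{|T|}(1+t)^{r-1-2|T|}$; and finish with Stanley's theorem identifying $\beta_{\L(M)}(T)$ with $\tilde H_{|T|-1}(\L(M)_T)$. The one place you diverge is the proof of that key summation identity (the paper's Lemma~\ref{lem:ChowIdentity}). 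The paper proves it by a bijective double count over a word model $\mathcal{W}_n$ of sequences in three symbols $\bullet,\times$, blank, adapted from Angarone--Nathanson--Reiner's combinatorics; you instead factor the sum over the gaps of $T\cup\{0,r\}$ and evaluate each gap contribution via rational generating functions (using the collapse $1-\tfrac{tx^2}{(1-x)(1-tx)}=\tfrac{1-(1+t)x}{(1-x)(1-tx)}$). Both routes are valid; the paper's word model is more visual and is reused verbatim (with the shifted index $w_0\dots w_{n-1}$) for the augmented case, while your generating-function computation is more compact and makes the gap-by-gap product structure of the answer transparent, at the price of having to justify the factorization of $V_T$ and to handle the last gap separately since it uses $[r-s_\ell]_t$ rather than $t[\cdot]_t$ — which you do correctly, obtaining $(1+t)^{d-1}$ there versus $t(1+t)^{d-2}$ on interior gaps.
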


In the remainder of this section, we prove Conjecture \ref{conj:Equiv} in Theorem \ref{thm:GammaChow}, as well as its augmented counterpart in Theorem \ref{thm:GammaAug}.
Note that the Feichtner-Yuzvinsky bases $\FY(M)$ and $\widetilde{\FY}(M)$ form permutation bases of $A(M)_{\mathbb{C}}$ and $\widetilde{A}(M)_{\mathbb{C}}$, respectively, under the action of $G$.

\begin{prop} \label{prop:HilbChow}
Let $M$ be a loopless matroid of rank $r$.
\begin{itemize} 
    \item[(i)] The $G$-equivariant Hilbert series of $A(M)_{\mathbb{C}}$ is given by
    \[
        \Hilb_G(A(M)_{\mathbb{C}},t)=\sum_{S\subseteq [r-1]}\phi_{S,r}(t)[\alpha_{\L(M)}(S)],
    \]
    where for a subset $S=\{s_1<s_2<\cdots<s_{\ell}\}\subseteq [r-1]$,
    \[
        \phi_{S,r}(t)\coloneqq t^{|S|}[s_1-1]_t[s_2-s_1-1]_t\cdots[s_{\ell}-s_{\ell-1}-1]_t[r-s_{\ell}]_t.
    \]
    In particular, $\phi_{\emptyset,r}(t)=[r]_t$.
    \item[(ii)] The $G$-equivariant Hilbert series of $\widetilde{A}(M)_{\mathbb{C}}$ is given by
    \[
    \Hilb_G(\widetilde{A}(M)_{\mathbb{C}},t)=\sum_{S\subseteq[r-1]}\psi_{S,r}(t)[\alpha_{\L(M)}(S)],
    \]
    where for a subset $S=\{s_1<s_2<\cdots<s_{\ell}\}\subseteq [r-1]$,
    \[
    \psi_{S,r}(t)=\begin{cases}
        t^{|S|}[s_1]_t[s_2-s_1-1]_t\ldots[s_{\ell}-s_{\ell-1}-1]_t[r-s_{\ell}]_t & \text{ if }S\neq \emptyset\\
        [r+1]_t & \text{ if } S=\emptyset
    \end{cases}.
    \]
\end{itemize}

\end{prop}
\begin{proof}
For (i), recall that the Feichtenr-Yuzvinsky basis $\FY(M)$ consists of monomials $x_{F_1}^{a_1}x_{F_2}^{a_2}\ldots x_{F_{\ell}}^{a_{\ell}}$ indexed by chains (including the empty chain)
\[
    \emptyset\neq F_1\subsetneq F_2\subsetneq\cdots\subsetneq F_{\ell}\subseteq{E}
\]
such that the exponent $a_j$ satisfies $1\le a_j\le \rk(F_j)-\rk(F_{j-1})-1$ for $j=1,2,\ldots,\ell$, where we set $F_0=\emptyset$.
For any subset $S=\{s_1<s_2<\cdots<s_{\ell}\}\subseteq [r-1]$, the permutation $\mathbb{C}G$-module generated by chains in $\L(M)$ with rank set $S$ is
\begin{align*}
    \alpha_{\L(M)}(S)
    &=\mathbb{C}G\{F_1\subsetneq \cdots\subsetneq F_{\ell}: \rk(F_i)=s_i~\forall i\}.\\
    &\cong \mathbb{C}G\{F_1\subsetneq \cdots\subsetneq F_{\ell}\subsetneq E: \rk(F_i)=s_i~\forall i\}.
\end{align*}
Hence, as a graded $\mathbb{C}G$-module, $A(M)_{\mathbb{C}}$ decomposes as a direct sum of $\alpha_{\L(M)}(S)$ over all $S\subseteq [r-1]$.

Assume that $
    \Hilb_G(A(M)_{\mathbb{C}},t)=\sum_{S\subseteq [r-1]}\phi_{S,r}(t)[\alpha_{\L(M)}(S)]$
for some polynomials $\phi_{S,r}(t)$. To determine $\phi_{S,t}(t)$, we consider a map $f:\FY(M)\longrightarrow 2^{[r-1]}$ defined by 
\[
    f(x_{F_1}^{a_1}x_{F_2}^{a_2}\ldots x_{F_{\ell}}^{a_{\ell}}x_E^i)=\{\rk(F_1),\rk(F_2),\ldots,\rk(F_{\ell})\}
\]
where all flats $F_i$ are distinct and are not $E$. Since for any $S=\{s_1<\cdots<s_{\ell}\}\subseteq [r-1]$, the inverse image of $f$ on $S$ consists of monomials
\[
    f^{-1}(S)=\left\{x_{F_1}^{a_1}x_{F_2}^{a_2}\ldots x_{F_{\ell}}^{a_{\ell}}x_E^i : \substack{\emptyset=F_0\subsetneq F_1\subsetneq\cdots\subsetneq F_{\ell}\subsetneq E \text{ with }\rk(F_j)=s_j\\
    1\le a_j\le s_j-s_{j-1}-1  \text{ for }j=1,2\ldots,\ell\\
    0\le i\le r-s_{\ell}-1
    }\right\},
\]
Therefore,
\begin{align*}
    \phi_{S,r}(t)
    &=(t+t^2+\cdots+t^{s_1-1})\ldots(t+t^2+\cdots+t^{s_{\ell}-s_{\ell-1}-1})(1+t+\cdots+t^{r-s_{\ell}-1})\\
    &=t^{|S|}[s_1-1]_t[s_2-s_1-1]_t\ldots[s_{\ell}-s_{\ell-1}-1]_t[r-s_{\ell}]_t.
\end{align*}
In particular, when $S$ is an empty set, the inverse image $f^{-1}(\emptyset)=\{x_E^i~:~0\le i\le r-1\}$ and $\phi_{\emptyset,r}(t)=[r]_t$.

For (ii), the argument is analogous. The basis $\widetilde{\FY}(M)$ consists of monomials indexed by chains as above, with exponents satisfying
$1\le a_1\le \rk(F_1)$ and $1\le a_j\le \rk(F_j)-\rk(F_{j-1})-1$ for $j\ge 2$. 
Thus, we write
\[
\Hilb_G(\widetilde{A}(M)_{\mathbb{C}},t)
=\sum_{S\subseteq [r-1]} \psi_{S,r}(t)\,[\alpha_{\L(M)}(S)].
\]
Define $g:\widetilde{\FY}(M)\longrightarrow 2^{[r-1]}$ by
\[
g(x_{F_1}^{a_1}\cdots x_{F_\ell}^{a_\ell}x_E^i)
=\{\rk(F_1),\ldots,\rk(F_\ell)\}.
\]
Then counting $g^{-1}(S)$ for $S=\{s_1<\ldots<s_{\ell}\}\subseteq [r-1]$ gives 
\[
    \psi_{S,r}(t)=\begin{cases}
        t^{|S|}[s_1]_t[s_2-s_1-1]_t\cdots[s_{\ell}-s_{\ell-1}-1]_t[r-s_{\ell}]_t & \text{ if }S\neq \emptyset\\
        [r+1]_t & \text{ if } S=\emptyset
    \end{cases}.,
\]
as claimed.
\end{proof}

The following theorem establishes Conjecture \ref{conj: Real-rootedChow} and provides explicit expressions for the (equivariant) $\gamma$-coefficients of $A(M)$. It also generalizes the positivity of the (equivariant) Charney--Davis quantities for $A(M)$ obtained in \cite[Lemma~4.9]{Liao2023Two}.

\begin{thm} \label{thm:GammaChow}
For any loopless matroid $M$ of rank $r$ on the ground set $E$ with the lattice of flats $\L(M)$, the Chow ring $A(M)$ is equivariant $\gamma$-positive with the following $\gamma$-expansion
\begin{align*}
    \Hilb_{G}(A(M)_{\mathbb{C}},t) 
    &=\sum_{S\in\Stab([2,r-1])}[\beta_{\L(M)}(S)] t^{|S|}(1+t)^{r-1-2|S|}\\
    &=\sum_{k=0}^{\floor{\frac{r-1}{2}}}\left(\sum_{\substack{S\in\Stab([2,r-1])\\ |S|=k}}[\beta_{\L(M)}(S)]\right)t^k(1+t)^{r-1-2k},
\end{align*}
where $\beta_{\L(M)}(S)\cong \tilde{H}_{|S|-1}(\L(M)_S)$ as a $\mathbb{C}G$-module
\end{thm}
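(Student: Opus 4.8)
The plan is to exploit the fact that the Feichtner--Yuzvinsky basis $FY(M)$ is a permutation basis for $A(M)_{\mathbb{C}}$ under the $G$-action, so that $\Hilb_G(A(M)_{\mathbb{C}},t)$ is a generating function over $G$-stable sets of monomials, graded by degree. First I would group the basis monomials $x_{F_1}^{a_1}\cdots x_{F_\ell}^{a_\ell}$ according to the chain of flats $\emptyset = F_0\subsetneq F_1\subsetneq\cdots\subsetneq F_\ell$ that supports them; for a fixed such chain the contribution to the Hilbert series is a product $\prod_{i=1}^{\ell}(t+t^2+\cdots+t^{\rk_M(F_i)-\rk_M(F_{i-1})-1})$, since each exponent $a_i$ ranges over $[1,\rk_M(F_i)-\rk_M(F_{i-1})-1]$. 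The key combinatorial identity to establish is that $\sum_{j=1}^{m-1}t^j$, summed appropriately over flags and reorganized, produces exactly the $t^k(1+t)^{r-1-2k}$ shape: concretely I expect to reduce to the classical fact that the ``$h$''-type generating function $\sum t^j(\text{over lengths})$, when one sums over all chains in $\L(M)$ with the rank gaps recorded, can be rewritten via an inclusion-exclusion over rank-selected subposets. This is where Stanley's Theorem~\ref{thm:BetaGenRep} enters: the permutation representation $\alpha_{\L(M)}(T)$ on maximal chains of $\L(M)_T$ and its M\"{o}bius inverse $\beta_{\L(M)}(T)\cong_G \tilde{H}_{|T|-1}(\L(M)_T)$ are genuine representations, and the coefficient of a given monomial-shape in the Hilbert series should be expressible as a nonnegative combination of these $\beta$'s.

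The technical heart is a bookkeeping lemma at the level of $R_{\mathbb{C}}(G)[[t]]$: I would show
\[
    \Hilb_G(A(M)_{\mathbb{C}},t) = \sum_{\emptyset = F_0\subsetneq\cdots\subsetneq F_\ell}\;\prod_{i=1}^{\ell}\bigl(t+t^2+\cdots+t^{\rk_M(F_i)-\rk_M(F_{i-1})-1}\bigr)\cdot[\,\text{something}\,],
\]
where the chains are taken up to the $G$-action and the bracketed term records the stabilizer-induced representation; but more usefully, I would organize the sum over \emph{proper part} chains, i.e. chains strictly between $\hat 0$ and $\hat 1$, so that $\alpha_{\L(M)}(T)$ naturally appears. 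Identifying each factor $t+\cdots+t^{m-1} = t(1+t+\cdots+t^{m-2})$, I would match $t(1+t)^{\bullet}$ contributions to ranks one can ``skip'' freely and the pure $t$ factors to ranks that are forced to appear in the rank-selected set $S$. The constraint that consecutive ranks cannot both be forced --- because a gap $\rk_M(F_i)-\rk_M(F_{i-1})-1\ge 1$ requires the ranks to differ by at least $2$ whenever the middle is nonempty --- is precisely what produces the condition $S\in\Stab([2,r-1])$: the selected ranks form a subset of $[2,r-1]$ with no two consecutive. The ranks $0$, $1$, and $r$ never appear because $\rk_M(F_1)-\rk_M(F_0)-1$ forces $\rk_M(F_1)\ge 2$ for a nontrivial factor, and similarly at the top.

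Having set up this correspondence, the final step is to recognize the coefficient of $t^{|S|}(1+t)^{r-1-2|S|}$ as $\sum_{T\subseteq S}(-1)^{|S|-|T|}\alpha_{\L(M)}(T) = \beta_{\L(M)}(S)$, which by Theorem~\ref{thm:BetaGenRep} equals $[\tilde H_{|S|-1}(\L(M)_S)]$ and is a genuine representation since $\L(M)$ is Cohen--Macaulay. Collecting terms of equal $|S|=k$ then yields the displayed second line. I expect the main obstacle to be the combinatorial reorganization in the middle paragraph: turning the product of truncated geometric series $\prod(t+\cdots+t^{m_i-1})$ over all flags into the clean $\sum_S \beta_{\L(M)}(S)\, t^{|S|}(1+t)^{r-1-2|S|}$ form requires a careful, $G$-equivariant inclusion-exclusion that simultaneously tracks which ranks are ``used up'' by the $t$-factors and which are absorbed into $(1+t)$ blocks; getting the signs and the no-two-consecutive bookkeeping to align exactly with $\beta_{\L(M)}$ is the delicate point, whereas once the identity is in hand the positivity is immediate from Stanley's theorem.
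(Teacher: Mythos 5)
Your proposal follows the same overall architecture as the paper's proof: take the Feichtner--Yuzvinsky basis as a permutation basis, group basis monomials by the rank set $S$ of the flats strictly between $\hat 0$ and $\hat 1$, write $\Hilb_G(A(M)_{\mathbb C},t)=\sum_{S}\phi_{S,r}(t)\,[\alpha_{\L(M)}(S)]$ with the $\phi_{S,r}$ being explicit products of $t$-integers, M\"obius-invert $\alpha_{\L(M)}(S)=\sum_{T\subseteq S}\beta_{\L(M)}(T)$, and invoke Stanley's Theorem~\ref{thm:BetaGenRep} to identify $\beta_{\L(M)}(T)\cong_G\tilde H_{|T|-1}(\L(M)_T)$ and obtain positivity from Cohen--Macaulayness of $\L(M)$. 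You also correctly identify why the support lies in $\Stab([2,r-1])$.

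The gap is exactly what you flag as the ``delicate point,'' and it is not a bookkeeping triviality. After swapping the order of summation you must show the polynomial identity
\[
    \sum_{T\subseteq S\subseteq[r-1]}\phi_{S,r}(t)=t^{|T|}(1+t)^{r-1-2|T|}
    \qquad\text{for every } T\in\Stab([2,r-1]),
\]
and this is a genuine combinatorial statement about the $\phi_{S,r}$'s, not a formal consequence of Stanley's theorem or of inclusion--exclusion. The paper isolates it as Lemma~\ref{lem:ChowIdentity} and proves it by a bijective double-count on length-$(r-1)$ sequences in the three symbols $\bullet$, $\times$, and blank, where $\bullet$'s mark forced ranks, each $\bullet$ has a $\times$ immediately to its left, and the local rules in Definition~\ref{def:ChowModel} make the $\bullet$'s determined by the $\times$'s; summing $t^{\cro(\mathbf w)}$ over $\mathbf w$ with $\Dot(\mathbf w)\supseteq T$ first by fixing $\Dot(\mathbf w)=S$ (yielding $\sum_{S\supseteq T}\phi_{S,r}$) and then by free choice of $\times$ in the $r-1-2|T|$ unconstrained positions (yielding $t^{|T|}(1+t)^{r-1-2|T|}$) gives the two sides. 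Your sketch of matching pure $t$-factors to forced ranks and $(1+t)$-blocks to freely skippable ranks is the right intuition for this lemma, but it is not a proof, and your phrasing ``recognize the coefficient of $t^{|S|}(1+t)^{r-1-2|S|}$ as $\beta_{\L(M)}(S)$'' inverts the logic---there is no way to read a $\gamma$-coefficient off a polynomial by inspection; one must produce the identity. Supply Lemma~\ref{lem:ChowIdentity} (or an equivalent argument) and the proof is complete; without it the argument does not close.
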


\begin{proof}
Notice that the polynomial $\phi_{S,r}(t)$ in Proposition \ref{prop:HilbChow} (i) is equal to $0$ if $s_j-s_{j-1}=1$ for some $1\le j\le \ell$.
Hence $\phi_{S,r}(t)$ takes nonzero value only when $S\in\Stab([2,r-1])$. 
The identity in Proposition \ref{prop:HilbChow} can be rephrased as follows:
\begin{align*}
    \Hilb_G(A(M)_{\mathbb{C}},t)
    &=\sum_{S\in \Stab([2,r-1])}\phi_{S,r}(t)[\alpha_{\L(M)}(S)]\\
    &=\sum_{S\in \Stab([2,r-1])}\phi_{S,r}(t)\sum_{T:T\subseteq S}[\beta_{\L(M)}(T)]\\
    &=\sum_{T\in\Stab([2,r-1])}\left(\sum_{T\subseteq S\subseteq [r-1]}\phi_{S,r}(t)\right)[\beta_{\L(M)}(T)],
\end{align*}
where the second equality follows from equation (\ref{eq: flagFtoH}). 
It turns out that this complicated sum has a neat reformulation:
\[
    \sum_{T\subseteq S\subseteq [r-1]}\phi_{S,r}(t)=t^{|T|}(1+t)^{r-1-2|T|}.
\]
This fact and its proof will be presented in Lemma \ref{lem:ChowIdentity}. Combining it with Theorem \ref{thm:BetaGenRep}, which states that $\beta_{\L(M)}(T)$ is isomorphic to $\tilde{H}_{|T|-1}(\L(M)_T)$ as $\mathbb{C}G$-modules, we obtain
\begin{align*}
    \Hilb_G(A(M)_{\mathbb{C}},t)
    &=\sum_{T\in\Stab([2,r-1])}[\beta_{\L(M)}(T)]t^{|T|}(1+t)^{r-1-2|T|}\\
    &=\sum_{T\in\Stab([2,r-1])}[\tilde{H}_{|T|-1}(\L(M)_T)]t^{|T|}(1+t)^{r-1-2|T|}.
\end{align*}
\end{proof}

\begin{rem}
The $\gamma$-expansion in Theorem \ref{thm:GammaChow}, together with its augmented counterpart in Theorem \ref{thm:GammaAug}, recovers and extends several known $\gamma$-expansions from \cite{ShareshianWachs2010}, \cite{ShareshianWachs2020}, and \cite{AthanasiadisGamma}.
Thus, the present approach yields a unified framework for these results; see Section \ref{sec:Uniform} for further details.
\end{rem}

\begin{rem}
It is relatively rare for equivariant $\gamma$-positivity to hold in such generality. To the best of the author's knowledge, another instance of this phenomenon appears in the context of \emph{Rees products of posets} by Athanasiadis  \cite{Athanasiadis2020EqGamma}.
In that setting, a key step in establishing equivariant $\gamma$-positivity is \cite[Lemma 3.1]{Athanasiadis2020EqGamma}, which involves a pair of identities closely resembling Lemma \ref{lem:ChowIdentity} and its augmented counterpart, Lemma \ref{lem:AugIdentity}.
It would be of interest to understand whether there is a common underlying structure that explains this parallel between the two settings.
\end{rem}

\begin{lem} \label{lem:ChowIdentity}
For a positive integer $n\ge 2$ and a subset $T\in\Stab([2,n-1])$, 
\begin{equation} \label{eq:ChowIdentity}
    \sum_{T\subseteq S\subseteq [n-1]}\phi_{S,n}(t)=t^{|T|}(1+t)^{n-1-2|T|},
\end{equation}
where $\phi_{S,n}(t)=0$ if $S\notin\Stab([2,n-1])$ and for $S=\{s_1<\cdots<s_\ell\}\in\Stab([2,n-1])$,
\[
    \phi_{S,n}(t)=t^{|S|}[s_1-1]_t[s_2-s_1-1]_t\ldots[s_\ell-s_{\ell-1}-1]_t[n-s_\ell]_t.
\]
\end{lem}

We give a combinatorial proof of Lemma \ref{lem:ChowIdentity}. 
This is done by double-counting a combinatorial model constructed below. 
The model is motivated by Angarone, Nathanson, and Reiner's combinatorial proof of \cite[Theorem 1.1]{ANR2023PermRepchow}.

\begin{defi} \label{def:ChowModel}
For $n\ge 2$, let $\mathcal{W}_n$ be the set of  sequences $\mathbf{w}=w_1 w_2\ldots w_{n-1}$ over the alphabet set $\{\bullet, \times, \underline{\phantom{~}} (\text{a blank space})\}$ satsifying the following conditions:
\begin{itemize}
    \item[(W1)] If $w_i=\bullet$, then $w_{i-1}=\times$ (hence $w_1\neq\bullet$ and there is no consecutive $\bullet$ in $\mathbf{w}$).
\end{itemize}
Let $S=\{s_1<s_2<\ldots<s_\ell\}\in \Stab([2,n-1])$ be the set of indices $s$ with $w_s=\bullet$.
\begin{itemize}
    \item[(W2)] If there is $s_i<j<s_{i+1}$ such that $w_j=\times$, then $w_j=w_{j+1}=\ldots=w_{s_{i+1}-1}=\times$.
    \item[(W3)] If there is $s_{\ell}<j<n$ such that $w_j=\times$, then $w_j=w_{j+1}=\ldots=w_{n-1}=\times$.
\end{itemize}
For $\mathbf{w}\in\mathcal{W}_n$, let $\Dot(\mathbf{w})=\{i\in [n-1]: w_i=\bullet\}$ and $\cro(\mathbf{w})$ be the number of crossings $\times$ in $\mathbf{w}$.
\end{defi}

\begin{proof}[Proof of Lemma \ref{lem:ChowIdentity}]
We prove (\ref{eq:ChowIdentity}) by double-counting
\[
\sum_{\substack{\mathbf{w}\in\mathcal{W}_n\\ \Dot(\mathbf{w})\supseteq T}} t^{\cro(\mathbf{w})}.
\]

\noindent \textbf{First counting.} Fix a subset $S\in\Stab([2,n-1])$, say $S=\{s_1<\ldots<s_\ell\}$.
For a sequence $\mathbf{w}=w_1\ldots w_{n-1}\in \mathcal{W}_n$ with $\Dot(\mathbf{w})=S$, we have $w_{s_{j}}=\bullet$ for all $j$. 
By (W1), each $\bullet$ forces $w_{s_j-1}=\times$. 
Between consecutive positions $s_{j-1}$ and $s_j$, the symbol at $s_j-1$ is fixed to be $\times$, and we may choose any number of additional consecutive $\times$ immediately to its left, ranging from $0$ to $s_j-s_{j-1}-2$. This contributes a factor
\[
t\left(1 + t + \cdots + t^{s_j-s_{j-1}-2}\right) = t[s_j-s_{j-1}-1]_t.
\]
A similar argument applies to the initial segment before $s_1$ and the final segment after $s_\ell$, with the latter contributing $[n-s_\ell]_t$.
It follows that, if $S$ is nonempty, 
\[
    \sum_{\substack{\mathbf{w}\in\mathcal{W}_n\\ \Dot(\mathbf{w})=S}}t^{\cro(\mathbf{w})}
    =
    t^{|S|}[s_1-1]_t[s_2-s_1-1]_t\cdots[s_\ell-s_{\ell-1}-1]_t[n-s_\ell]_t =\phi_{S,n}(t);
\]
While $S=\emptyset$, $\sum_{\substack{\mathbf{w}\in\mathcal{W}_n\\ \Dot(\mathbf{w})=\emptyset}}t^{\cro(\mathbf{w})}=1+t+\cdots+t^{n-1}=[n]_t=\phi_{\emptyset,n}(t)$.
Summing over all $S\supseteq T$ gives 
\[
    \sum_{\substack{\mathbf{w}\in\mathcal{W}_n\\ \Dot(\mathbf{w})\supseteq T}}t^{\cro(\mathbf{w})}=\sum_{T\subseteq S\subseteq [n-1]}\phi_{S,n}(t).
\]

\noindent \textbf{Second counting.} Fix $\mathbf{w}\in\mathcal{W}_n$ with $\Dot(\mathbf{w})\supseteq T$. For each $i\in T$, we have $w_i=\bullet$, and hence by (W1), $w_{i-1}=\times$. These forced $\times$ contribute a factor $t^{|T|}$.
The remaining positions, namely those not of the form $i$ or $i-1$ for $i\in T$, form a set of size $n-1-2|T|$. On these positions, we may independently choose whether to place a $\times$ or leave a blank, contributing a factor $(1+t)^{n-1-2|T|}$.
Given such a choice, the conditions (W2) and (W3) uniquely determine the placement of the remaining $\bullet$ symbols so that $\mathbf{w}\in\mathcal{W}_n$. Therefore,
\[
\sum_{\substack{\mathbf{w}\in\mathcal{W}_n\\ \Dot(\mathbf{w})\supseteq T}} t^{\cro(\mathbf{w})}
= t^{|T|}(1+t)^{n-1-2|T|}.
\]
Comparing the two expressions completes the proof.
\end{proof}

In the following, we illustrate the procedure in the proof of Lemma \ref{lem:ChowIdentity} by some examples.
\begin{exa}
Let $n=12$ and $T=\{3,9\}\in\Stab([2,11])$.

\medskip
\noindent
\textbf{First counting.}
Fix $S\supseteq T$.

\smallskip
\emph{Case 1:}
If $S=\{3,5,9\}$, we place $\bullet$ at positions $3,5,9$, and by (W1) we must place $\times$ at $2,4,8$. The remaining positions are initially blank:
\begin{center}
\begin{tabular}{c c c c c c c c c c c}
1 & 2 & 3 & 4 & 5 & 6 & 7 & 8 & 9 & 10 & 11\\
$\underline{\phantom{~}}$ & $\times$ & $\bullet$ & $\times$ & $\bullet$ & $\underline{\phantom{~}}$ & $\underline{\phantom{~}}$ & $\times$ & $\bullet$ & $\underline{\phantom{~}}$ & $\underline{\phantom{~}}$
\end{tabular}
\end{center}

Between consecutive $\bullet$'s, we may extend the forced $\times$'s to the left. For instance, between $w_5=\bullet$ and $w_9=\bullet$, we may choose
\[
(\underline{\phantom{~}},\underline{\phantom{~}},\times),\quad
(\underline{\phantom{~}},\times,\times),\quad
(\times,\times,\times),
\]
contributing $t+t^2+t^3=t[3]_t$. Proceeding similarly for all segments, we obtain
\[
\sum_{\substack{\mathbf{w}\in\mathcal{W}_{12}\\ \Dot(\mathbf{w})=\{3,5,9\}}}
t^{\cro(\mathbf{w})}
= t[2]_t \cdot t[1]_t \cdot t[3]_t \cdot [3]_t
= t^3[2]_t[1]_t[3]_t[3]_t.
\]

\smallskip
\emph{Case 2:}
If $S=\{3,9,11\}$, we place $\bullet$ at $3,9,11$ and $\times$ at $2,8,10$:
\begin{center}
\begin{tabular}{c c c c c c c c c c c}
1 & 2 & 3 & 4 & 5 & 6 & 7 & 8 & 9 & 10 & 11\\
$\underline{\phantom{~}}$ & $\times$ & $\bullet$ & $\underline{\phantom{~}}$ & $\underline{\phantom{~}}$ & $\underline{\phantom{~}}$ & $\underline{\phantom{~}}$ & $\times$ & $\bullet$ & $\times$ & $\bullet$
\end{tabular}
\end{center}

Since $11=n-1\in S$, there is no trailing segment after the last $\bullet$. The contributions from the segments give
\[
\sum_{\substack{\mathbf{w}\in\mathcal{W}_{12}\\ \Dot(\mathbf{w})=\{3,9,11\}}}
t^{\cro(\mathbf{w})}
= t[2]_t \cdot t[5]_t \cdot t[1]_t
= t^3[2]_t[5]_t[1]_t.
\]

\medskip
\noindent
\textbf{Second counting.}
We count sequences $\mathbf{w}$ with $\Dot(\mathbf{w})\supseteq T=\{3,9\}$. Then $w_3=w_9=\bullet$ and $w_2=w_8=\times$ are forced. The remaining positions
\[
\{1,4,5,6,7,10,11\}
\]
can be chosen independently to be either $\times$ or blank, contributing
\[
t^{|\{2,8\}|}(1+t)^{|\{1,4,5,6,7,10,11\}|}
= t^2(1+t)^{12-1-2\cdot 2}.
\]
For example, if we choose $w_1=w_4=w_5=w_{10}=\times$, we obtain
\begin{center}
\begin{tabular}{c c c c c c c c c c c}
1 & 2 & 3 & 4 & 5 & 6 & 7 & 8 & 9 & 10 & 11\\
$\times$ & $\times$ & $\bullet$ & $\times$ & $\times$ & $\underline{\phantom{~}}$ & $\underline{\phantom{~}}$ & $\times$ & $\bullet$ & $\times$ & $\underline{\phantom{~}}$
\end{tabular}.
\end{center}
There is then a unique way to insert additional $\bullet$'s so that $\mathbf{w}\in\mathcal{W}_{12}$, namely:
\begin{center}
\begin{tabular}{c c c c c c c c c c c}
1 & 2 & 3 & 4 & 5 & 6 & 7 & 8 & 9 & 10 & 11\\
$\times$ & $\times$ & $\bullet$ & $\times$ & $\times$ & $\bullet$ & $\underline{\phantom{~}}$ & $\times$ & $\bullet$ & $\times$ & $\bullet$
\end{tabular}.
\end{center}
This illustrates the identity
\[
\sum_{\substack{\mathbf{w}\in\mathcal{W}_{12}\\ \Dot(\mathbf{w})\supseteq T}}
t^{\cro(\mathbf{w})}
= t^2(1+t)^7.
\]
\end{exa}
 
The following theorem is the augmented counterpart of Theorem \ref{thm:GammaChow}. 
It generalizes the positivity of the (equivariant) Charney--Davis quantities for the augmented Chow ring $\widetilde{A}(M)$ obtained in \cite[Lemma~4.12]{Liao2023Two}.
The proof follows the same strategy as in the Chow case, with a modified combinatorial model.

\begin{thm} \label{thm:GammaAug}
For any loopless matroid $M$ of rank $r$ on the ground set $E$ with the lattice of flats $\L(M)$, the augmented Chow ring $\widetilde{A}(M)_{\mathbb{C}}$ is equivariant $\gamma$-positive with the following $\gamma$-expansion
\begin{align*}
    \Hilb_{G}(\widetilde{A}(M)_{\mathbb{C}},t) 
    &=\sum_{S\in\Stab([r-1])}[\beta_{\L(M)}(S)] t^{|S|}(1+t)^{r-2|S|}\\
    &=\sum_{k=0}^{\floor{\frac{r}{2}}}\left(\sum_{\substack{S\in\Stab([r-1])\\ |S|=k}}[\beta_{\L(M)}(S)]\right)t^k(1+t)^{r-2k}.
\end{align*}
where $\beta_{\L(M)}(S)\cong \tilde{H}_{|S|-1}(\L(M)_S)$ as $\mathbb{C}G$-modules.
\end{thm}

\begin{proof}
The argument is parallel to that of Theorem \ref{thm:GammaChow}.
By Proposition \ref{prop:HilbChow} (ii), we have
\[
\Hilb_G(\widetilde{A}(M)_{\mathbb{C}},t)
=\sum_{S\subseteq [r-1]} \psi_{S,r}(t)\,[\alpha_{\L(M)}(S)].
\]
Since $\psi_{S,r}(t)=0$ whenever $S$ contains consecutive integers, the sum reduces to $S\in\Stab([r-1])$. Thus, 
\begin{align*}
    \Hilb_G(\widetilde{A}(M)_{\mathbb{C}},t)
    &=\sum_{S\in\Stab([r-1])}\psi_{S,r}(t)[\alpha_{\L(M)}(S)]\\
    &=\sum_{S\in\Stab([r-1])}\psi_{S,r}(t)\sum_{T\subseteq S}[\beta_{\L(M)}(T)]\\
    &=\sum_{T\in\Stab([r-1])}\left(\sum_{T\subseteq S\subseteq [r-1]}\psi_{S,r}(t)\right)[\beta_{\L(M)}(T)].
\end{align*}
By Lemma \ref{lem:AugIdentity} below, the inner sum satisfies
\[
    \sum_{T\subseteq S\subseteq [r-1]}\psi_{S,r}(t)=t^{|T|}(1+t)^{r-2|T|},
\] 
Combining this with Theorem \ref{thm:BetaGenRep}, we obtain
\begin{align*}
    \Hilb_G(\widetilde{A}(M)_{\mathbb{C}},t)
    &=\sum_{T\in\Stab([r-1])}[\beta_{\L(M)}(T)]t^{|T|}(1+t)^{r-2|T|}\\
    &=\sum_{T\in\Stab([r-1])}[\tilde{H}_{|T|-1}(\L(M)_T)]t^{|T|}(1+t)^{r-2|T|}.
\end{align*}
\end{proof}

\begin{lem} \label{lem:AugIdentity}
For $n\ge 2$ and subset $T\in\Stab([n-1])$, 
\begin{equation} \label{eq:AugIdentity}
    \sum_{T\subseteq S\subseteq [n-1]}\psi_{S,n}(t)=t^{|T|}(1+t)^{n-2|T|}
\end{equation}
where $\psi_{S,n}(t)=0$ if $S\notin\Stab([n-1])$ and for $S=\{s_1<\cdots<s_{\ell}\}\in\Stab([n-1])$,
\[
    \psi_{S,n}(t)=\begin{cases}
        t^{|S|}[s_1]_t[s_2-s_1-1]_t\ldots[s_{\ell}-s_{\ell-1}-1]_t[n-s_{\ell}]_t  & \text{ if }S\neq\emptyset\\
        [n+1]_t  & \text{ if }S=\emptyset
    \end{cases}.
\]
\end{lem}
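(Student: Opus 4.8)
The plan is to prove Lemma~\ref{lem:AugIdentity} by the same two-way combinatorial counting argument used for Lemma~\ref{lem:ChowIdentity}, with the model $\mathcal{W}_n$ enlarged slightly so that a $\bullet$ is allowed in position $1$. Concretely, I would introduce a variant set $\widetilde{\mathcal{W}}_n$ of sequences $\mathbf{w}=w_1\ldots w_{n-1}$ in the three symbols $\bullet$, $\times$, blank, where condition (W1) is replaced by: if $w_i=\bullet$ and $i\ge 2$ then $w_{i-1}=\times$ (so $w_1=\bullet$ is now permitted, and there is still no consecutive $\bullet$), while (W2) and (W3) are retained verbatim. As before, for $\mathbf{w}\in\widetilde{\mathcal{W}}_n$ set $\Dot(\mathbf{w})=\{i: w_i=\bullet\}$ and let $\cro(\mathbf{w})$ count the $\times$'s. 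The claim will then be that for $T\in\Stab([n-1])$,
\[
    \sum_{\substack{\mathbf{w}\in\widetilde{\mathcal{W}}_n\\ \Dot(\mathbf{w})\supseteq T}} t^{\cro(\mathbf{w})}
    = \sum_{T\subseteq S\subseteq[n-1]}\psi_{S,n}(t)
    = t^{|T|}(1+t)^{n-1-2|T|}\cdot(\text{correction for position }1),
\]
and I expect the bookkeeping of that last correction factor to be exactly where the ``$+1$'' in the exponent $n-2|T|$ (versus $n-1-2|T|$ in the Chow case) comes from.

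First I would compute the ``$\Dot(\mathbf{w})=S$'' generating function for fixed $S=\{s_1<\cdots<s_\ell\}\in\Stab([n-1])$. For $j\ge 2$ the section strictly between $w_{s_{j-1}}$ and $w_{s_j}$ behaves exactly as in Lemma~\ref{lem:ChowIdentity} and contributes $t[s_j-s_{j-1}-1]_t$; the section after $w_{s_\ell}$ contributes $[n-s_\ell]_t$ (and is trivial, i.e. equals $1$, when $n-1\in S$). The new feature is the initial block $w_1\ldots w_{s_1-1}$ together with the forced $\times$ at position $s_1-1$: since $w_1=\bullet$ is now allowed, and (W1)$'$ only forces a $\times$ immediately left of each $\bullet$, the block $w_1\ldots w_{s_1-1}$ can be any run of $0$ to $s_1-1$ trailing $\times$'s (the $\times$ at $s_1-1$ being the first of them when $s_1\ge 2$, or the block being empty when $s_1=1$), contributing $[s_1]_t$ rather than $t[s_1-1]_t$. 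This is precisely why $\psi_{S,n}(t)$ has the leading factor $[s_1]_t$ instead of $t[s_1-1]_t$, and it reproduces the stated formula for $\psi_{S,n}$, including the degenerate case $S=\emptyset$, where $\mathbf{w}$ is an unrestricted run of $0$ to $n-1$ trailing $\times$'s preceded by blanks, giving $[n]_t$ --- and I should double-check here against the stated value $[n+1]_t$ for $\psi_{\emptyset,n}$, since the $x_E$-exponent ranges over $0\le j\le r$, i.e. $r+1=n+1$ values, so the monomial count is $[n+1]_t$ and the discrepancy with the pure-$\mathcal{W}$ count $[n]_t$ is absorbed into how the $S=\emptyset$ term is treated (this is the one place the model needs a small ad hoc adjustment, exactly as $[r-s_\ell]_t$ versus the ``$x_E$ tail'' was handled in Theorem~\ref{thm:GammaChow}). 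Summing over $S\supseteq T$ then gives the left-hand side of \eqref{eq:AugIdentity}.

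Next I would run the reverse count. Given $T=\{t_1<\cdots<t_m\}\in\Stab([n-1])$, any $\mathbf{w}\in\widetilde{\mathcal{W}}_n$ with $\Dot(\mathbf{w})\supseteq T$ must have $w_{t_i}=\bullet$ and hence (by (W1)$'$, using that $T$ has no consecutive elements so no $t_i$ can equal $1$ unless... wait, $t_1$ could be $1$) --- here is the subtlety: if $t_1=1$ there is no forced $\times$ to its left, whereas if $t_1\ge 2$ the position $t_1-1$ carries a forced $\times$. In the case $t_1\ge 2$, exactly $m$ positions $\{t_i-1: 1\le i\le m\}$ are forced to be $\times$, contributing $t^m=t^{|T|}$, and the remaining $n-1-2m$ positions (those not in $T$ and not immediately left of an element of $T$) are each freely a $\times$ or a blank, contributing $(1+t)^{n-1-2m}$; after these choices there is a unique way to insert additional $\bullet$'s to land in $\widetilde{\mathcal{W}}_n$ (the same greedy/deterministic completion as in Lemma~\ref{lem:ChowIdentity}, now allowed to use position $1$). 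That yields $t^{|T|}(1+t)^{n-1-2|T|}$, not the claimed $t^{|T|}(1+t)^{n-2|T|}$; the missing factor of $(1+t)$ --- and in the case $t_1=1$ the missing factor of $t$ --- must come from one extra free position, namely position $n$ or equivalently the ``$x_E$-tail'' degree, which the model as literally stated does not include. So the honest fix is to count on length-$n$ sequences $w_1\ldots w_n$ (or to append one extra free slot), which restores the exponent $n-2|T|$ uniformly across the cases $n-1\in S$, $n-1\notin S\ne\emptyset$, $S=\emptyset$, and $t_1=1$ vs.\ $t_1\ge 2$. The main obstacle, then, is not the algebra but getting the model and its boundary conventions exactly right so that all these degenerate cases (empty $S$, $n-1\in S$, $t_1=1$, and the extra $x_E$-slot) are handled by a single uniform bijective statement; once the correct enlarged model $\widetilde{\mathcal{W}}_n$ is pinned down, both sides of \eqref{eq:AugIdentity} equal $\sum_{\mathbf{w}} t^{\cro(\mathbf{w})}$ over $\{\mathbf{w}\in\widetilde{\mathcal{W}}_n: \Dot(\mathbf{w})\supseteq T\}$ and the lemma follows, after which the deferred identity $\sum_{T\subseteq S}\psi_{S,r}(t)=t^{|T|}(1+t)^{r-2|T|}$ used in the proof of Theorem~\ref{thm:GammaAug} is immediate.
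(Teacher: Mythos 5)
Your instinct—enlarge $\mathcal{W}_n$ by one position and redo the two-way count—is exactly the paper's strategy, but the specific enlargement you propose does not work, and you yourself flag the symptom without diagnosing the cause. The paper's model $\widetilde{\mathcal{W}}_n$ consists of length-$n$ sequences $w_0w_1\cdots w_{n-1}$ indexed \emph{starting at $0$}, with (W1)--(W3) retained \emph{verbatim}: (W1) then automatically forces $w_0\neq\bullet$, and the crucial point is that $w_0$ is \emph{not} a free slot—it is forced to be $\times$ exactly when $w_1=\bullet$. This is what makes both boundary cases come out right in the reverse count: when $1\in T$ the position $0$ contributes a forced $t$ (so the factor $t^{|T|}$ survives), and when $1\notin T$ it contributes a free $(1+t)$. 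Your proposal instead weakens (W1) to (W1)$'$ so that $w_1=\bullet$ forces nothing, and then tries to recover the lost factor by appending a genuinely free slot. A free slot always contributes $(1+t)$, so this repairs the case $t_1\geq 2$ (where you are short a factor $(1+t)$) but cannot repair $t_1=1$ (where you are short a factor of $t$, not $(1+t)$)—and you explicitly note this discrepancy but leave it unresolved, saying the model "needs to be pinned down."

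There is also an error in your forward count. With (W1)$'$ and $s_1\geq 2$, the position $s_1-1$ carries a forced $\times$, so the initial block $w_1\cdots w_{s_1-1}$ contributes $t+t^2+\cdots+t^{s_1-1}=t[s_1-1]_t$, not $[s_1]_t$ as you claim (a run of $0$ trailing $\times$'s is impossible because of the forced one). The factor $[s_1]_t$ in $\psi_{S,n}(t)$ sits \emph{alongside} the $t^{|S|}$, so the initial block must contribute $t\cdot[s_1]_t$; in the paper's index-from-$0$ model this block is $w_0\cdots w_{s_1-1}$, which has $s_1$ positions with $w_{s_1-1}$ forced to be $\times$, giving $t+\cdots+t^{s_1}=t[s_1]_t$ as required, and for $S=\emptyset$ the full length-$n$ sequence of trailing $\times$'s gives $1+t+\cdots+t^n=[n+1]_t$ directly, with no ad hoc adjustment. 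So the correct single uniform statement you are looking for is simply: prepend position $0$, do not touch (W1), and the deterministic completion argument from Lemma~\ref{lem:ChowIdentity} then goes through unchanged.
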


We use another combinatorial model slightly different from $\mathcal{W}_n$ in the non-augmented case to prove Lemma \ref{lem:AugIdentity}. 

\begin{defi}
For $n\ge 2$, let $\widetilde{\mathcal{W}}_n$ be the set of sequences $\mathbf{w}=w_0w_1w_2\ldots w_{n-1}$ of length $n$ over the alphabet set $\{\bullet, \times, \underline{\phantom{~}} (\text{a blank space})\}$ satisfying conditions
(W1), (W2), and (W3) from Definition \ref{def:ChowModel}.
\end{defi}
Condition (W1) implies that $w_0\neq \bullet$ and no two consecutive entries of $\mathbf{w}$ are both $\bullet$. 
As in the non-augmented case, we define $\Dot(\mathbf{w})$ to be the set of positions occupied by $\bullet$, and $\cro(\mathbf{w})$ to be the number of $\times$’s in $\mathbf{w}$ for $\mathbf{w}\in\widetilde{\mathcal{W}}_n$.

\begin{proof}[Proof of Lemma \ref{lem:AugIdentity}]
The argument is parallel to that of Lemma \ref{lem:ChowIdentity}. We compute
\[
\sum_{\substack{\mathbf{w}\in\widetilde{\mathcal{W}}_n\\ \Dot(\mathbf{w})\supseteq T}} t^{\cro(\mathbf{w})}
\]
in two ways.

\noindent \textbf{First counting.}
Fix $S=\{s_1<\cdots<s_\ell\}\in\Stab([n-1])$ with $S\supseteq T$. As in the non-augmented case, the structure of $\mathbf{w}\in\widetilde{\mathcal{W}}_n$ with $\Dot(\mathbf{w})=S$ yields
\[
\sum_{\substack{\mathbf{w}\in\widetilde{\mathcal{W}}_n\\ \Dot(\mathbf{w})=S}} t^{\cro(\mathbf{w})}
=\psi_{S,n}(t),
\]
where the contribution from each interval $[s_{j-1},s_j]$ is given by $[s_j-s_{j-1}-1]_t$ except when $j=1$ it is $[s_1]_t$, and the final segment contributes either $[n-s_\ell]_t$ or is absorbed into the special case $S=\emptyset$. Summing over all $S\supseteq T$ gives
\[
\sum_{\substack{\mathbf{w}\in\widetilde{\mathcal{W}}_n\\ \Dot(\mathbf{w})\supseteq T}} t^{\cro(\mathbf{w})}
=\sum_{T\subseteq S\subseteq [n-1]} \psi_{S,n}(t).
\]

\smallskip

\noindent \textbf{Second counting.}
We instead construct sequences with $\Dot(\mathbf{w})\supseteq T$ directly. First, for each $i\in T$, we force the pattern $w_i=\bullet$ and $w_{i-1}=\times$, contributing a factor $t^{|T|}$.
After removing these forced positions, there remain $n-2|T|$ free positions. 
In each such position, we independently choose either a $\times$ or a blank space, contributing a factor $(1+t)^{n-2|T|}$.
Given any such choice, there is a unique way to insert additional $\times$'s so that the resulting sequence lies in $\widetilde{\mathcal{W}}_n$. Hence,
\[
\sum_{\substack{\mathbf{w}\in\widetilde{\mathcal{W}}_n\\ \Dot(\mathbf{w})\supseteq T}} t^{\cro(\mathbf{w})}
= t^{|T|}(1+t)^{n-2|T|}.
\]
Equating the two expressions completes the proof.
\end{proof}

\begin{rem} \label{rem: ChowPoset}
In a forthcoming paper, Ferroni, Matherne, and Vecchi \cite{FMVChowPoly} generalize the Chow and augmented Chow polynomials of matroids (geometric lattice) to a more general class of posets. 
They prove these polynomials are $\gamma$-positive for Cohen--Macaulay posets, using a strategy analogous to that of the present paper, though without the combinatorial proofs of Lemma \ref{lem:ChowIdentity} and Lemma \ref{lem:AugIdentity} (See Section \ref{sec: Developments}).
\end{rem}

\section{Uniform and $q$-uniform matroids} \label{sec:Uniform}
In this section, we consider the special case where $M$ is the uniform matroid $U_{r,n}$ of rank $r$ and $G=\mathfrak{S}_n$ is the symmetric group. In this setting, the lattice of flats can be identified with a rank-selected subposet of the Boolean lattice $B_n=(2^{[n]},\subseteq)$.
The equivariant $\gamma$-coefficients $\tilde{H}_{|R|-1}((B_n)_R)$ carry a well-studied $\mathbb{C}\mathfrak{S}_n$-module structure for any $R\subseteq [n-1]$.
As a consequence, our results recover and extend several known $\gamma$-positivity phenomena in symmetric functions and permutation enumeration.

For background on representations of $\mathfrak{S}_n$, standard Young tableaux, and symmetric functions, we refer to Wachs \cite{WachsTool2007} and Stanley \cite{StanleyEC2}.
For permutation statistics such as $\exc$, $\inv$, and $\maj$, see Chapter 1 of Stanley \cite{StanleyEC1}.

Let \defin{$\ch$} denote the \defin{Frobenius characteristic map}, which assigns to each isomorphism class of $\mathbb{C}\mathfrak{S}_n$-modules a symmetric function of degree $n$. Let \defin{$\ps$} denote the \defin{stable principal specialization}, defined by substituting $x_i = q^{i-1}$ in a symmetric function.
In place of the equivariant Hilbert series, it is often convenient to use the graded Frobenius series to encode a graded $\mathfrak{S}_n$-representation as a polynomial with coefficients in symmetric functions. For a graded $\mathbb{C}\mathfrak{S}_n$-module $V=\bigoplus_i V_i$, the \defin{graded Frobenius series} of V is defined by
\[
\grFrob(V,t)\coloneqq \sum_i \ch(V_i)\, t^i.
\]

Let $\lambda$, $\mu$ be two partitions such that $\mu\subseteq\lambda$ (i.e. $\mu_i\le \lambda_i$ for all $i$). The connected skew shape $\lambda/\mu$ is said to be a \defin{ribbon} (or a \emph{skew hook} or a \emph{border strip}) if two consecutive rows always overlap in exactly one cell. A ribbon $H_{R,n}$ can be described in terms of the number $n$ of its cells and its \emph{descent set} $R\subseteq [n-1]$. Given a ribbon of $n$ cells, we label its cells with numbers $1$ through $n$, starting from the southwestern end cell, through the adjacent cell, ending at the northeastern end cell. We say a cell $i$ is a \defin{descent} of the ribbon if cell $i+1$ is directly above cell $i$. Then the collection of all its descents is called its \defin{descent set}.

\begin{exa}
Let $n=7$. Then the diagrams of $H_{\{2,4\},7}$ and $H_{\{1,3\},7}$ are 
\[
\begin{array}{ccccc}
    H_{\{2,4\},7} &   &    &    & H_{\{1,3\},7} \\
                & &      &  &            \\
    \begin{ytableau}
    \none & \none & 5 & 6 & 7\\
    \none & 3 & *(green)4\\ 
    1     & *(green)2 
    \end{ytableau}  
    &   &     &     &
    \begin{ytableau}
    \none & 4 & 5 & 6 & 7\\
    2     & *(green)3 \\
    *(green) 1
    \end{ytableau}
\end{array}.
\]
\end{exa}
Let $S^{\lambda/\mu}$ be the \emph{Specht module} of skew shape $\lambda/\mu$. If $\lambda/\mu$ is a ribbon $H_{R,n}$, the corresponding Specht module $S^{H_{R,n}}$ is known as a \emph{Foulkes representation}. We summarize some facts of this representation from  \cite[Theorem 4.3]{Stanley1982GroupPoset} and \cite[Theorem 3.4.4]{WachsTool2007} in Theorem \ref{thm:RankHomologyBn}
(Note that Theorem \ref{thm:RankHomologyBn} (i) was first proved by Solomon \cite{Solomon1968} in a different expression for any finite Coxeter group).

For a partition $\lambda$ of $n$, denoted by $\lambda\vdash n$, let $SYT(\lambda)$ be the set of standard Young tableaux of shape $\lambda$. For $P\in SYT(\lambda)$, let $\DES(P)$ be the set of entries (called descents) $i\in[n-1]$ such that $i+1$ appears in a lower row than $i$ in $P$, and $\des(P)=|\DES(P)|$.

\begin{thm} \label{thm:RankHomologyBn}
Let $R\subseteq [n-1]$. 
\begin{itemize}
    \item[(i)] Then
\[
    \tilde{H}_{|R|-1}({(B_n)}_R)\cong_{\S_n} S^{H_{R,n}}\cong_{\S_n}\bigoplus_{\lambda\vdash n}b_{\lambda}(R)S^{\lambda},
\]    
where $b_{\lambda}(R)$ is the number of $P\in SYT(\lambda)$ for which $\DES(P)=R$.
    \item[(ii)] Let $\mathbb{P}$ denote the set of positive integers and $\mathbb{P}_n$ the set of words of length $n$ over $\mathbb{P}$. For $w=w_1\ldots w_n\in\mathbb{P}_n$, we say $i\in\DES(w)$ if $w_i>w_{i+1}$ and write $\x_w=x_{w_1}\ldots x_{w_n}$. Then the Frobenius characteristic is the ribbon Schur function
\[
    \ch(\tilde{H}_{|R|-1}({(B_n)}_R))=s_{H_{R,n}}=\sum_{\substack{w\in\mathbb{P}_n\\ \DES(w)=R}}\mathbf{x}_w,
\]

\end{itemize}

\end{thm}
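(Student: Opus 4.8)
The plan is to deduce everything from Stanley's homology formula (Theorem~\ref{thm:BetaGenRep}) applied to the Boolean lattice $B_n$, together with one Frobenius-characteristic computation. Since $B_n$ is Cohen–Macaulay, Theorem~\ref{thm:BetaGenRep} gives $\tilde{H}_{|R|-1}({(B_n)}_R)\cong_{\S_n}\beta_{B_n}(R)=\sum_{T\subseteq R}(-1)^{|R|-|T|}\alpha_{B_n}(T)$ and, crucially, that $\beta_{B_n}(R)$ is a genuine representation, hence is determined up to isomorphism by its image under $\ch$. So it suffices to compute $\ch(\beta_{B_n}(R))$. First I would identify $\alpha_{B_n}(T)$: for $T=\{t_1<\cdots<t_j\}$, the maximal chains of $(B_n)_T$ are the flags $\emptyset\subsetneq A_1\subsetneq\cdots\subsetneq A_j\subsetneq[n]$ with $|A_i|=t_i$; the group $\S_n$ acts transitively on these, and the stabilizer of a flag is the Young subgroup $\S_{t_1}\times\S_{t_2-t_1}\times\cdots\times\S_{n-t_j}$ (the permutations fixing each layer setwise). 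Hence $\alpha_{B_n}(T)\cong\mathrm{Ind}_{\S_{\mathrm{co}(T)}}^{\S_n}\mathbf{1}$, where $\mathrm{co}(T)$ is the composition of $n$ determined by $T$, and therefore $\ch(\alpha_{B_n}(T))=h_{\mathrm{co}(T)}$.

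Next I would evaluate $\ch(\beta_{B_n}(R))=\sum_{T\subseteq R}(-1)^{|R|-|T|}h_{\mathrm{co}(T)}$ and recognize it as a ribbon Schur function; this is precisely the classical identity $s_{H_{R,n}}=\sum_{T\subseteq R}(-1)^{|R|-|T|}h_{\mathrm{co}(T)}$ (a special case of the skew Jacobi–Trudi formula). To obtain the word description in (ii) directly, I would observe that a word $w=w_1\cdots w_n\in\mathbb{P}_n$ satisfies $\DES(w)\subseteq R$ if and only if $w$ is weakly increasing on each of the $|R|+1$ blocks cut out by $R$, so $\sum_{w:\,\DES(w)\subseteq R}\x_w=h_{\mathrm{co}(R)}$; splitting by the exact descent set gives $h_{\mathrm{co}(R)}=\sum_{T\subseteq R}\sum_{w:\,\DES(w)=T}\x_w$, and Möbius inversion over the subsets of $R$ yields $\sum_{w:\,\DES(w)=R}\x_w=\sum_{T\subseteq R}(-1)^{|R|-|T|}h_{\mathrm{co}(T)}=\ch(\beta_{B_n}(R))=s_{H_{R,n}}$. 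Since the skew Specht (Foulkes) module $S^{H_{R,n}}$ also satisfies $\ch\,S^{H_{R,n}}=s_{H_{R,n}}$ and $\ch$ is injective on genuine representations, this simultaneously gives (ii) and the first isomorphism in (i).

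For the Schur expansion $s_{H_{R,n}}=\sum_{\lambda\vdash n}b_\lambda(R)s_\lambda$, I would combine Gessel's expansion $s_\lambda=\sum_{P\in SYT(\lambda)}F_{\DES(P),n}$ into fundamental quasisymmetric functions with the analogous fundamental expansion of the ribbon $s_{H_{R,n}}$; matching the coefficient of $s_\lambda$ then forces it to equal $\#\{P\in SYT(\lambda):\DES(P)=R\}=b_\lambda(R)$. Equivalently, one expands each $h_{\mathrm{co}(T)}$ into Schur functions via semistandard tableaux (Kostka numbers $K_{\lambda,\mathrm{co}(T)}$) and uses a standardization bijection to identify $\sum_{T\subseteq R}(-1)^{|R|-|T|}K_{\lambda,\mathrm{co}(T)}$ with the number of standard Young tableaux of shape $\lambda$ with descent set exactly $R$.

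The routine steps are the stabilizer computation and the Möbius inversion, and the genuineness of $\beta_{B_n}(R)$ that lets us reconstruct the module from its characteristic is exactly the Cohen–Macaulay input supplied by Theorem~\ref{thm:BetaGenRep}. The one step needing real care is the last one: converting the signed alternating sum of Kostka-type numbers into an \emph{exact}-descent-set count, i.e.\ the standardization argument relating semistandard fillings of content $\mathrm{co}(T)$ to standard tableaux with a prescribed descent set (equivalently, the compatibility of Gessel's fundamental expansion with ribbon Schur functions). That is where I would expect to spend most of the effort.
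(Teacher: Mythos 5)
Your proposal is correct. One caveat worth flagging: the paper itself gives no proof of Theorem~\ref{thm:RankHomologyBn} — it is stated as a summary of facts cited from Stanley's paper \cite{Stanley1982GroupPoset} (Theorem~4.3 there) and Wachs's survey \cite{WachsTool2007} (Theorem~3.4.4), so there is no in-paper argument to compare against; what you have done is reconstruct the standard proofs that the paper delegates to the literature. Your chain is the right one: transitivity of $\S_n$ on flags in $(B_n)_T$ with Young-subgroup stabilizer gives $\ch(\alpha_{B_n}(T))=h_{\mathrm{co}(T)}$; the alternating sum identifies $\ch(\beta_{B_n}(R))$ with the ribbon Schur function via Jacobi--Trudi; $B_n$ Cohen--Macaulay plus Theorem~\ref{thm:BetaGenRep} makes $\beta_{B_n}(R)$ genuine, so the isomorphism with $S^{H_{R,n}}$ follows from equality of characteristics; and the word model in (ii) comes out of the same M\"obius inversion. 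For the Schur expansion, your first suggestion (``matching the coefficient of $s_\lambda$'' between two fundamental-quasisymmetric expansions) is not really a matching argument as stated — comparing $F$-coefficients yields a different identity, not directly $b_\lambda(R)$ — but your second route is correct and complete: $K_{\lambda,\mathrm{co}(T)}=\#\{P\in SYT(\lambda):\DES(P)\subseteq T\}$ by standardization of semistandard fillings, and then M\"obius inversion over $T\subseteq R$ gives $\sum_{T\subseteq R}(-1)^{|R|-|T|}K_{\lambda,\mathrm{co}(T)}=b_\lambda(R)$. (Equivalently one can apply RSK directly to $\sum_{\DES(w)=R}\x_w$, using that RSK preserves the descent set of the recording tableau.) So the argument holds; just drop or tighten the first ``matching'' phrasing and lean on the standardization/RSK step, which, as you correctly identify, is the only part requiring genuine care.
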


Applying Theorem \ref{thm:GammaChow} to the uniform matroids $U_{r,n}$ together with Theorem \ref{thm:BetaGenRep}, one obtains the graded Frobenius series of $A(U_{r,n})_{\mathbb{C}}$. Applying the operator  $\prod_{i=1}^n(1-q^i)\ps(-)$ to the graded Frobenius series gives the Hilbert series of $q$-Uniform matroid $U_{r,n}(q)$ (See Section 3 in \cite{Liao2023Two} for the explanation). Note that the stable principal specialization on the ribbon Schur function gives
\[
    \prod_{i=1}^n(1-q^i)\ps(s_{H_{R,n}})=\sum_{\substack{\sigma\in\S_n \\ \DES(\sigma)=R}}q^{\maj(\sigma^{-1})}=\sum_{\substack{\sigma\in\S_n \\ \DES(\sigma)=R}}q^{\inv(\sigma)}
\]
(see \cite[Thm. 4.4]{ShareshianWachs2020} and its proof for a detailed discussion of the above identity). We obtain the following corollary, which generalizes Corollary 4.10 and 4.11 of \cite{Liao2023Two}    

\begin{cor} \label{cor:ChowUniform}
For any positive integer $n$ and $1\le r\le n$,
\begin{align*}
    \grFrob(A(U_{r,n})_{\mathbb{C}},t)
    &=\sum_{R\in\Stab([2,r-1])}s_{H_{R,n}}t^{|R|}(1+t)^{r-1-2|R|}\\
    &=\sum_{k=0}^{\lfloor\frac{r-1}{2}\rfloor}\left(\sum_{\substack{R\in\Stab([2,r-1])\\ |R|=k}}s_{H_{R,n}}\right)t^k(1+t)^{r-1-2k}
\end{align*}
and 
\begin{align*}
    \Hilb(A(U_{r,n}(q)),t)
    =\sum_{k=0}^{\lfloor\frac{r-1}{2}\rfloor}\xi_{r,n,k}(q)t^k(1+t)^{r-1-2k}
\end{align*}
where 
\[
    \xi_{r,n,k}(q)=\sum_{\sigma}q^{\maj(\sigma^{-1})}=\sum_{\sigma}q^{\inv(\sigma)}
\]
and the sum runs through all $\sigma\in\S_n$ for which $\DES(\sigma)\in \Stab([2,r-1])$ has $k$ elements.
\end{cor}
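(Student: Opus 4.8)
The plan is to obtain Corollary~\ref{cor:ChowUniform} directly from Theorem~\ref{thm:GammaChow} by specializing to the uniform matroid $U_{r,n}$ with the symmetric group $G=\S_n$ acting on it. First I would recall that the lattice of flats $\L(U_{r,n})$ consists of $\emptyset$, all subsets of $[n]$ of size $1,2,\ldots,r-1$, and the top element $[n]$, so it is precisely the rank-selected Boolean lattice obtained from $B_n$ by keeping ranks in $[r-1]$ together with $\hat 0,\hat 1$. Consequently, for any $S\subseteq[r-1]$ the rank-selected subposet $\L(U_{r,n})_S$ coincides with $(B_n)_S$ as a poset with $\S_n$-action, so Theorem~\ref{thm:RankHomologyBn}(i) applies: $\tilde H_{|S|-1}(\L(U_{r,n})_S)\cong_{\S_n}S^{H_{S,n}}$, whose Frobenius characteristic is the ribbon Schur function $s_{H_{S,n}}$ by Theorem~\ref{thm:RankHomologyBn}(ii). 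Applying $\ch$ termwise to the $\gamma$-expansion in Theorem~\ref{thm:GammaChow} then yields the first displayed identity of the corollary, i.e. the formula for $\grFrob(A(U_{r,n})_{\mathbb C},t)$; the regrouping into the second line is just collecting terms with $|R|=k$.

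Next I would pass from the graded Frobenius series to the Hilbert series of the $q$-uniform matroid $U_{r,n}(q)$ by applying the operator $\prod_{i=1}^n(1-q^i)\,\ps(-)$, as recalled in the paragraph preceding the corollary (with the reference to Section~3 of \cite{Liao2023Two} for why this operator recovers the $q$-analogue Hilbert series). Since this operator is linear and fixes polynomials in $t$ (the factor $(1+t)^{r-1-2k}$ and the power $t^k$ carry no $x$-variables), it suffices to compute $\prod_{i=1}^n(1-q^i)\,\ps(s_{H_{R,n}})$ for each ribbon. Here I invoke the stated identity
\[
    \prod_{i=1}^n(1-q^i)\,\ps(s_{H_{R,n}})=\sum_{\substack{\sigma\in\S_n\\ \DES(\sigma)=R}}q^{\maj(\sigma^{-1})}=\sum_{\substack{\sigma\in\S_n\\ \DES(\sigma)=R}}q^{\inv(\sigma)},
\]
which follows from \cite[Thm.~4.4]{ShareshianWachs2020} and the standard fact that $\maj\circ(\;)^{-1}$ is equidistributed with $\inv$ on each descent class. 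Summing over $R\in\Stab([2,r-1])$ with $|R|=k$ and interchanging the two sums gives exactly $\xi_{r,n,k}(q)=\sum_{\sigma}q^{\maj(\sigma^{-1})}=\sum_{\sigma}q^{\inv(\sigma)}$, the sum ranging over $\sigma\in\S_n$ with $\DES(\sigma)\in\Stab([2,r-1])$ and $|\DES(\sigma)|=k$, since the descent classes partition $\S_n$ and the condition $\DES(\sigma)\in\Stab([2,r-1])$ with $k$ elements is precisely the union over the relevant $R$. This establishes the second displayed identity.

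The only genuine point requiring care — and the step I expect to be the main obstacle in writing this cleanly — is justifying that the $\S_n$-equivariant structures match, namely that $\L(U_{r,n})_S$ and $(B_n)_S$ are isomorphic not merely as posets but as $\S_n$-posets, so that the homology representations agree; this is where one must be slightly careful about the role of the top element $[n]$ (which is a flat of $U_{r,n}$ but not an atom-rank element of $B_n$) and about the index set, noting that $S$ ranges over $\Stab([2,r-1])$ rather than all of $[r-1]$ because $\phi_{S,r}(t)$ vanishes otherwise, and that in the ribbon $H_{S,n}$ the descent set $S$ is taken as a subset of $[n-1]$ even though it a priori lies in $[r-1]\subseteq[n-1]$. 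Everything else is a direct substitution: Theorem~\ref{thm:GammaChow} does the real work, Theorem~\ref{thm:RankHomologyBn} identifies the coefficients, and the specialization identity converts symmetric functions into permutation-statistic generating functions.
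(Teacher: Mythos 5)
Your proposal is correct and follows essentially the same route as the paper: apply Theorem~\ref{thm:GammaChow} to $U_{r,n}$, identify the rank-selected homology of $\L(U_{r,n})$ with that of $B_n$ via Theorem~\ref{thm:RankHomologyBn}, pass to graded Frobenius series, and apply the operator $\prod_{i=1}^n(1-q^i)\,\ps(-)$ with the stated specialization identity for ribbon Schur functions. Your explicit attention to why $\L(U_{r,n})_S\cong (B_n)_S$ as $\S_n$-posets is a sensible (and correct) addition that the paper leaves implicit.
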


Applying Theorem \ref{thm:GammaAug} and the stable principal specialization as in Corollary \ref{cor:ChowUniform}, we obtain the augmented counterpart of Corollary \ref{cor:ChowUniform}, which generalizes Corollary 4.13 and 4.14 of \cite{Liao2023Two}.

\begin{cor} \label{cor:AugUniform}
For any positive integer $n$ and $1\le r\le n$,
\begin{align*}
    \grFrob(\widetilde{A}(U_{r,n})_{\mathbb{C}},t)
    &=\sum_{R\in\Stab([r-1])}s_{H_{R,n}}t^{|R|}(1+t)^{r-2|R|}\\
    &=\sum_{k=0}^{\lfloor\frac{r}{2}\rfloor}\left(\sum_{\substack{R\in\Stab([r-1])\\ |R|=k}}s_{H_{R,n}}\right)t^k(1+t)^{r-2k}
\end{align*}
and 
\[
    \Hilb(\widetilde{A}(U_{r,n}(q)),t)=\sum_{k=0}^{\lfloor\frac{r}{2}\rfloor}\widetilde{\xi}_{r,n,k}(q)t^k(1+t)^{r-2k},
\]
where 
\[
    \widetilde{\xi}_{r,n,k}(q)=\sum_{\sigma}q^{\maj(\sigma^{-1})}=\sum_{\sigma}q^{\inv(\sigma)}
\]
and the sum runs through all $\sigma\in\S_n$ for which $\DES(\sigma)\in\Stab([r-1])$ has $k$ elements.
\end{cor}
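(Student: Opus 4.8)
The plan is to specialize the general results of Section~\ref{sec:EqGamma}, in particular Theorem~\ref{thm:GammaAug}, to $M=U_{r,n}$ with $G=\S_n$, and then translate the output into symmetric functions and their $q$-specialization using the well-understood representation theory of the rank-selected Boolean lattice.

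First I would identify the relevant rank-selected posets. The lattice of flats $\L(U_{r,n})$ is the rank-$r$ truncation of the Boolean lattice $B_n$: its flats are all subsets of $[n]$ of cardinality at most $r-1$ together with $[n]$ itself, a proper flat having rank equal to its cardinality. Hence for every $S\subseteq[r-1]$ the elements of $\L(U_{r,n})$ whose rank lies in $S$ are precisely the subsets of $[n]$ of cardinality in $S$, so $\L(U_{r,n})_S$ and the rank-selected Boolean lattice $(B_n)_S$ coincide as posets with $\S_n$-action; in particular $\tilde H_{|S|-1}(\L(U_{r,n})_S)\cong_{\S_n}\tilde H_{|S|-1}((B_n)_S)$.

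Next I would substitute $M=U_{r,n}$ into Theorem~\ref{thm:GammaAug}, apply this identification, and invoke Theorem~\ref{thm:RankHomologyBn}: each equivariant $\gamma$-coefficient $[\tilde H_{|R|-1}((B_n)_R)]$ is the class of the Foulkes representation $S^{H_{R,n}}$, whose Frobenius characteristic is the ribbon Schur function $s_{H_{R,n}}$. Applying $\ch$ termwise to $\Hilb_{\S_n}(\widetilde A(U_{r,n})_{\mathbb C},t)$, which acts only on the symmetric-function coefficients and leaves the factors $t^{|R|}(1+t)^{r-2|R|}$ untouched, gives
\[
    \grFrob(\widetilde A(U_{r,n})_{\mathbb C},t)=\sum_{R\in\Stab([r-1])}s_{H_{R,n}}\,t^{|R|}(1+t)^{r-2|R|},
\]
and collecting the terms with $|R|=k$ produces the second displayed form.

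For the Hilbert series I would then apply the operator $\prod_{i=1}^n(1-q^i)\,\ps(-)$ to $\grFrob(\widetilde A(U_{r,n})_{\mathbb C},t)$, following the same reasoning as in the non-augmented case of Section~3 of \cite{Liao2023Two} recalled before Corollary~\ref{cor:ChowUniform}. Since this operator affects only the symmetric-function coefficients, the cited specialization of ribbon Schur functions \cite{ShareshianWachs2020} turns each $s_{H_{R,n}}$ into $\sum_{\sigma\in\S_n,\ \DES(\sigma)=R}q^{\maj(\sigma^{-1})}=\sum_{\sigma\in\S_n,\ \DES(\sigma)=R}q^{\inv(\sigma)}$; summing over $R\in\Stab([r-1])$ with $|R|=k$ collects exactly the permutations $\sigma\in\S_n$ with $\DES(\sigma)\in\Stab([r-1])$ and $|\DES(\sigma)|=k$, which is $\widetilde\xi_{r,n,k}(q)$. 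I do not expect a real obstacle here: the whole argument is a direct transfer of Theorems~\ref{thm:GammaAug} and \ref{thm:RankHomologyBn} through $\L(U_{r,n})_S\cong(B_n)_S$. The only point requiring care is checking that $\prod_{i=1}^n(1-q^i)\,\ps(-)$ applied to $\grFrob(\widetilde A(U_{r,n})_{\mathbb C},t)$ indeed computes the Hilbert series of the augmented Chow ring of the $q$-uniform matroid $U_{r,n}(q)$ — the augmented analog of the basis bookkeeping in \cite{Liao2023Two} — but this uses only the $\widetilde{FY}$ basis together with the standard stable-principal-specialization formula.
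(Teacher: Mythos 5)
Your proposal is correct and is essentially the argument the paper intends (the paper states Corollary~\ref{cor:AugUniform} follows by ``Applying Theorem~\ref{thm:GammaAug} and the similar procedures to $U_{r,n}$'', referring to the same chain of steps — identify $\L(U_{r,n})_S$ with $(B_n)_S$, invoke Theorem~\ref{thm:RankHomologyBn} to replace the rank-selected homology by the ribbon Schur function $s_{H_{R,n}}$, apply $\ch$ to pass from the equivariant Hilbert series to the graded Frobenius series, and then apply $\prod_{i=1}^n(1-q^i)\ps(-)$ to get the $q$-Hilbert series). You make explicit the poset identification $\L(U_{r,n})_S\cong(B_n)_S$ that the paper leaves implicit, and you correctly write the exponent in the first display as $r-2|R|$ where the paper's statement appears to have a typo ($n-1-2|R|$, inconsistent with the second line).
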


Recall from Corollary 4.2 and 4.4 of \cite{Liao2023Two} that
\begin{align}
    &\grFrob(A(U_{n-1,n}),t)=t^{-1}Q_n^0(\x,t),     \label{eq:Derangements}\\
    &\grFrob(A(U_{n,n}),t)=\grFrob(\widetilde{A}(U_{n-1,n}),t)=Q_n(\x,t), \label{eq:Permutations}\\
    &\grFrob(\widetilde{A}(U_{n,n}),t)=\widetilde{Q}_n(\x,t).  \label{eq:DecoratedPerms} 
\end{align}
Here $Q_n^0$, $Q_n$, $\widetilde{Q}_n$ are the \defin{Eulerian quasisymmetric functions} introduced by Shareshian and Wachs \cite{ShareshianWachs2010, ShareshianWachs2020}.
They admit combinatorial interpretations as enumerators over the sets of derangements $D_n$ (permutations with no fixed points), permutations $\mathfrak{S}_n$, and \emph{decorated permutations} $\widetilde{\mathfrak{S}}_n$ (permutations with two types of fixed points), respectively, satisfying
\[
D_n \subset \mathfrak{S}_n \subset \widetilde{\mathfrak{S}}_n.
\]
More precisely,
\begin{equation} \label{eq:InterpDerEulerQuas}
    Q_n^0(\x,t)=\sum_{\sigma\in D_n}F_{\DEX(\sigma),n}(\x)t^{\exc(\sigma)}, \quad Q_n(\x,t)=\sum_{\sigma\in \S_n}F_{\DEX(\sigma),n}(\x)t^{\exc(\sigma)},
\end{equation}
and
\begin{equation} \label{eq:InterpBinomEulerQuas}
    \widetilde{Q}_n(\x,t)=\sum_{\sigma\in\widetilde{\S}_n}F_{\DEX(\sigma),n}(\x)t^{\exc(\sigma)+1}.
\end{equation}
Here $F_{S,n}(\x)$ for $S\subseteq[n-1]$ denotes Gessel's \defin{fundamental quasisymmetric function} \footnote{Note that our definition of the fundamental quasisymmetric function is the reverse of the usual one. The usual definition is summed over $i_1\le i_2\le\cdots\le i_n$ with $j\in S$ implying $i_j<i_{j+1}$.}
\[
    F_{S,n}(\x)\coloneqq \sum_{\substack{i_1\ge i_2\ge \cdots\ge i_n\\ j\in S\Rightarrow i_j>i_{j+1}}}x_{i_1}x_{i_2}\ldots x_{i_n}
    \quad \text{ with }F_{\phi,0}\coloneqq 1.
\] 
See \cite{Liao2023Two} for the definition of the descent-like statistic $\DEX$ on $\widetilde{\mathfrak{S}}_n$ and further details on these symmetric functions. 
Finally, by Lemma 6.20 of \cite{Liao2023One}, applying the operator $\prod_{i=1}^n(1-q^i)\ps(-)$ to $Q_n^0$, $Q_n$, and $\widetilde{Q}_n$ yields the $q$-analogs of Eulerian type polynomials:
\[
    d_n(q,t)=\sum_{\sigma\in D_n}q^{\maj(\sigma)-\exc(\sigma)}t^{\exc(\sigma)}, \quad A_n(q,t)=\sum_{\sigma\in \S_n}q^{\maj(\sigma)-\exc(\sigma)}t^{\exc(\sigma)},
\]
and 
\[
    \widetilde{A}_n(q,t)\coloneqq 1+\sum_{k=1}^n\qbin{n}{k}A_k(q,t) =\sum_{\sigma\in \widetilde{\S}_n}q^{\maj(\sigma)-\exc(\sigma)}t^{\exc(\sigma)+1}.
\]

Let $NDD_n$ be the set of words $w\in\mathbb{P}_n$ with no double descents, i.e. $\DES(w)\in\Stab([n-1])$. Combining (\ref{eq:Derangements}) and Corollary \ref{cor:ChowUniform}, we recover the following collection of results from \cite[Theorem 7.3, Equation (7.9)]{ShareshianWachs2010} and \cite[Theorem 6.1]{ShareshianWachs2020}.

\begin{cor} \label{cor:Derangements}
For positive integer $n\ge 2$, we have 
\begin{align*}
    Q_n^0(\x,t)
    &=\sum_{R\in\Stab([2,n-2])}s_{H_{R,n}}t^{|R|}(1+t)^{n-1-2|R|}\\
    &=\sum_{\substack{w\in NDD_n\\ w_1\le w_2,~w_{n-1}\le w_n}}\x_w t^{\des(w)+1}(1+t)^{n-2-2\des(w)}\\
    &=\sum_{k=0}^{\lfloor\frac{n-2}{2}\rfloor}\xi_{n,k}^0(\x) t^{k+1}(1+t)^{n-2-2k},
\end{align*}
where 
\[
    \xi_{n,k}^0(\x)=\sum_{\substack{R\in\Stab([2,n-2])\\ |R|=k}}s_{H_{R,n}}=\sum_{\substack{w\in NDD_n\\ w_1\le w_2,~w_{n-1}\le w_n \\ \des(w)=k}}\x_w.
\]
And
\begin{align*}
    d_n(q,t)=\sum_{k=0}^{\lfloor\frac{n-2}{2}\rfloor}\xi_{n,k}^0(q)t^{k+1}(1+t)^{n-2-2k},
\end{align*}
where
\[
    \xi_{n,k}^0(q)=\sum_{\sigma}q^{\maj(\sigma^{-1})}=\sum_{\sigma}q^{\inv(\sigma)}
\]
and both of the sums run through $\sigma\in D_n$ which has $k$ descents and no double descents, no first, no last descent.
\end{cor}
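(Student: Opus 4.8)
The plan is to derive Corollary \ref{cor:Derangements} by specializing the results already established. For the first chain of equalities, I would start from identity (\ref{eq:Derangements}), which says $Q_n^0(\x,t)=t^{-1}\grFrob(A(U_{n-1,n}),t)$. Applying Corollary \ref{cor:ChowUniform} with $r=n-1$ immediately gives
\[
    Q_n^0(\x,t)=t^{-1}\sum_{R\in\Stab([2,n-2])}s_{H_{R,n}}t^{|R|}(1+t)^{n-2-2|R|},
\]
since $r-1=n-2$ here. Re-indexing by writing $k=|R|$ and shifting the exponent of $t$ by one yields the first and third displayed lines. The only subtlety is checking that $\Stab([2,(n-1)-1])=\Stab([2,n-2])$ matches the index set, which is a direct substitution.

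For the combinatorial (second) line, I would invoke Theorem \ref{thm:RankHomologyBn}(ii), which identifies $s_{H_{R,n}}=\sum_{w\in\mathbb{P}_n,\ \DES(w)=R}\x_w$. Summing over $R\in\Stab([2,n-2])$ with $|R|=k$ then collects exactly those words $w\in\mathbb{P}_n$ whose descent set has size $k$ and avoids consecutive integers (so $w$ has no double descent, i.e. $w\in NDD_n$) and additionally satisfies $1\notin\DES(w)$ and $n-1\notin\DES(w)$, which translates to $w_1\le w_2$ and $w_{n-1}\le w_n$. With $\des(w)=k$, substituting $t^{k+1}=t^{\des(w)+1}$ and $(1+t)^{n-2-2k}=(1+t)^{n-2-2\des(w)}$ recovers the middle expression and the formula for $\xi_{n,k}^0(\x)$. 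This step is essentially bookkeeping once Theorem \ref{thm:RankHomologyBn}(ii) is in hand.

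For the final two displays concerning $d_n(q,t)$, I would apply the operator $\prod_{i=1}^n(1-q^i)\ps(-)$ to the Frobenius identity, exactly as done in the passage preceding Corollary \ref{cor:ChowUniform}. Since this operator is linear and, by the stable principal specialization formula quoted in the excerpt, sends $s_{H_{R,n}}$ to $\sum_{\DES(\sigma)=R}q^{\maj(\sigma^{-1})}=\sum_{\DES(\sigma)=R}q^{\inv(\sigma)}$, applying it to the $k$-indexed sum produces $\xi_{n,k}^0(q)=\sum_\sigma q^{\maj(\sigma^{-1})}$ with $\sigma$ ranging over permutations with $\DES(\sigma)\in\Stab([2,n-2])$ and $|\DES(\sigma)|=k$ — that is, $\sigma$ has exactly $k$ descents, no double descent, and neither a first nor a last descent. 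The identification $\prod_{i=1}^n(1-q^i)\ps(t^{-1}Q_n^0(\x,t))=d_n(q,t)$ is precisely the content of Lemma 6.20 in \cite{Liao2023One} as cited, so no new argument is needed.

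The main obstacle — to the extent there is one — is verifying that the descent-set conditions $1\notin\DES(w)$, $n-1\notin\DES(w)$ correctly encode ``no first descent, no last descent'' and align with the endpoint inequalities $w_1\le w_2$, $w_{n-1}\le w_n$, together with the index shift $t\mapsto t^{-1}$ in (\ref{eq:Derangements}) that turns $t^{|R|}$ into $t^{|R|-1}$ before one reabsorbs it as $t^{\des(w)+1}$ with the convention $\des(w)=|R|$. This is purely notational care rather than a genuine difficulty; everything substantive is already supplied by Corollary \ref{cor:ChowUniform}, Theorem \ref{thm:RankHomologyBn}, and the cited specialization identities.
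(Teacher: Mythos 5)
Your approach is exactly the paper's (the paper's entire justification is ``Combining (\ref{eq:Derangements}) and Corollary \ref{cor:ChowUniform}''), and your combinatorial unpacking of the descent-set conditions via Theorem \ref{thm:RankHomologyBn}(ii) and the application of $\prod_{i=1}^n(1-q^i)\ps(-)$ are all correct. However, you have misread equation (\ref{eq:Derangements}): it states $\grFrob(A(U_{n-1,n}),t)=t^{-1}Q_n^0(\x,t)$, which rearranges to $Q_n^0(\x,t)=t\cdot\grFrob(A(U_{n-1,n}),t)$, \emph{not} $Q_n^0(\x,t)=t^{-1}\grFrob(A(U_{n-1,n}),t)$ as you wrote. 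Plugging in Corollary \ref{cor:ChowUniform} with $r=n-1$, the correct computation is
\[
    Q_n^0(\x,t)=t\sum_{R\in\Stab([2,n-2])}s_{H_{R,n}}\,t^{|R|}(1+t)^{n-2-2|R|}=\sum_{R\in\Stab([2,n-2])}s_{H_{R,n}}\,t^{|R|+1}(1+t)^{n-2-2|R|},
\]
which matches the third displayed line $t^{k+1}(1+t)^{n-2-2k}$ once $k=|R|$. Your version would produce $t^{|R|-1}$, off by $t^2$, and your final remark that the shift ``turns $t^{|R|}$ into $t^{|R|-1}$ before one reabsorbs it as $t^{\des(w)+1}$'' is internally inconsistent since $|R|-1\neq|R|+1$. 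This is a bookkeeping slip rather than a wrong strategy, but as written the derivation does not produce the stated identity; once the direction of the $t$-factor is corrected, everything else you wrote goes through.
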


Combining (\ref{eq:Permutations}) with Corollary \ref{cor:ChowUniform} and Corollary \ref{cor:AugUniform}, we recover the following collection of results from \cite[Theorem 7.3, Equation (7.8)]{ShareshianWachs2010} and \cite[Equations (1.4) and (6.1)]{LinussonShareshianWachs2012}.

\begin{cor} \label{cor:Permutations}
For positive integer $n\ge 1$, we have the following expressions
\begin{align*}
    Q_n(\x,t)
    &=\sum_{R\in\Stab([2,n-1])}s_{H_{R,n}}t^{|R|}(1+t)^{n-1-2|R|}=\sum_{R\in\Stab([n-2])}s_{H_{R,n}}t^{|R|}(1+t)^{n-1-2|R|}\\
    &=\sum_{\substack{w\in NDD_n\\ w_1\le w_2}}\x_w t^{\des(w)}(1+t)^{n-1-2\des(w)}=\sum_{\substack{w\in NDD_n\\ w_{n-1}\le w_n}}\x_w t^{\des(w)}(1+t)^{n-1-2\des(w)}\\
    &=\sum_{k=0}^{\lfloor\frac{n-1}{2}\rfloor}\xi_{n,k}(\x) t^{k}(1+t)^{n-1-2k},
\end{align*}
where 
\begin{align*}
    \xi_{n,k}(\x)
    &=\sum_{\substack{R\in\Stab([2,n-1])\\ |R|=k}}s_{H_{R,n}}
    =\sum_{\substack{R\in\Stab([n-2])\\ |R|=k}}s_{H_{R,n}}\\
    &=\sum_{\substack{w\in NDD_n\\ w_{1}\le w_2\\ \des(w)=k}}\x_w
    =\sum_{\substack{w\in NDD_n\\ w_{n-1}\le w_n\\ \des(w)=k}}\x_w.
\end{align*}
And 
\[
    A_n(q,t)=\sum_{k=0}^{\lfloor\frac{n-1}{2}\rfloor}\xi_{n,k}(q)t^{k+1}(1+t)^{n-1-2k},
\]
where 
\[
    \xi_{n,k}(q)=\sum_{\sigma}q^{\maj(\sigma^{-1})}=\sum_{\sigma}q^{\inv(\sigma)}
\]
and both of the sums run through either:
\begin{itemize}
    \item $\sigma\in\S_n$ with $k$ descents and no double descents, no first descent.
    \item $\sigma\in\S_n$ with $k$ descents and no double descents, no last descent.
\end{itemize}
\end{cor}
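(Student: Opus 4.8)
The plan is to read off every assertion of Corollary~\ref{cor:Permutations} from Corollary~\ref{cor:ChowUniform} and Corollary~\ref{cor:AugUniform} by specializing the rank and invoking the coincidences in (\ref{eq:Permutations}). First I would specialize Corollary~\ref{cor:ChowUniform} at $r=n$ and Corollary~\ref{cor:AugUniform} at $r=n-1$ (the latter needs $n\ge 2$; for $n=1$ the first specialization alone suffices). Since (\ref{eq:Permutations}) gives
\[
\grFrob(A(U_{n,n})_{\mathbb{C}},t)=\grFrob(\widetilde{A}(U_{n-1,n})_{\mathbb{C}},t)=Q_n(\x,t),
\]
these two specializations are exactly the two ribbon-Schur expansions of $Q_n(\x,t)$ --- one summed over $R\in\Stab([2,n-1])$, one over $R\in\Stab([n-2])$ --- and collecting the terms with $|R|=k$ yields the two ribbon-Schur formulas for $\xi_{n,k}(\x)$.

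Next I would convert these to expansions over words by substituting the monomial expansion of the ribbon Schur function from Theorem~\ref{thm:RankHomologyBn}(ii), $s_{H_{R,n}}=\sum_{w\in\mathbb{P}_n,\ \DES(w)=R}\x_w$, and interchanging the order of summation so that the outer index becomes $w$ with $R=\DES(w)$. The only thing to verify is the translation of the index sets: $\DES(w)\in\Stab([2,n-1])$ holds precisely when $w\in NDD_n$ and $1\notin\DES(w)$, i.e.\ $w_1\le w_2$; likewise $\DES(w)\in\Stab([n-2])$ holds precisely when $w\in NDD_n$ and $w_{n-1}\le w_n$; and in both cases $|R|=\des(w)$. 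This gives the two word expansions of $Q_n(\x,t)$ and the corresponding descriptions of $\xi_{n,k}(\x)$.

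For the $q$-Eulerian statement I would apply the operator $\prod_{i=1}^n(1-q^i)\ps(-)$ to $Q_n(\x,t)$: by Lemma~6.20 of \cite{Liao2023One} it sends $Q_n(\x,t)$ to $A_n(q,t)$, and applying it termwise to either ribbon expansion, together with the identity $\prod_{i=1}^n(1-q^i)\ps(s_{H_{R,n}})=\sum_{\DES(\sigma)=R}q^{\maj(\sigma^{-1})}=\sum_{\DES(\sigma)=R}q^{\inv(\sigma)}$ recorded just before the corollary, produces the asserted $\gamma$-expansion of $A_n(q,t)$; collecting $|\DES(\sigma)|=k$ and rewriting $\DES(\sigma)\in\Stab([2,n-1])$ (resp.\ $\Stab([n-2])$) as ``$\sigma$ has $k$ descents, no double descent, and no first descent'' (resp.\ ``no last descent'') gives the two formulas for $\xi_{n,k}(q)$.

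All of the substance is already contained in Corollary~\ref{cor:ChowUniform}, Corollary~\ref{cor:AugUniform}, and the cited specialization identities, so the proof is essentially bookkeeping; the one point that needs care is keeping the two distinct matroid sources $A(U_{n,n})$ and $\widetilde{A}(U_{n-1,n})$ apart and faithfully matching each $\Stab$-condition with its descent-set counterpart on words and on permutations.
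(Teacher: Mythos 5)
Your proposal follows precisely the route the paper indicates: specialize Corollary~\ref{cor:ChowUniform} at $r=n$ and Corollary~\ref{cor:AugUniform} at $r=n-1$, identify both graded Frobenius series with $Q_n(\x,t)$ via (\ref{eq:Permutations}), pass from ribbon Schur functions to words via Theorem~\ref{thm:RankHomologyBn}(ii), and then apply $\prod_{i=1}^n(1-q^i)\ps(-)$ to obtain the $A_n(q,t)$ statement. Your translation of the two index sets ($1\notin\DES(w)\Leftrightarrow w_1\le w_2$, $n-1\notin\DES(w)\Leftrightarrow w_{n-1}\le w_n$) and the handling of the small-$n$ edge case are both correct, so this is the same bookkeeping argument the paper compresses into one sentence.
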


\begin{rem}
In \cite[Theorem 2.1]{AthanasiadisGamma}, Athanasiadis summarizes different interpretations of the $\gamma$-coefficients for the Eulerian polynomial. In particular, the interpretations are given by counting permutations in $\S_n$ related to the descent set in $\Stab([2,n-1])$ or in $\Stab([n-2])$. This fact is lifted to the $q$-analog and the symmetric function analog in Corollary \ref{cor:Permutations}. The agreement of the two interpretations regarding $\Stab([2,n-1])$ and $\Stab([n-2])$ was shown algebraically by (\ref{eq:Permutations}).   
\end{rem}

Combining (\ref{eq:DecoratedPerms}) and Corollary \ref{cor:AugUniform}, we recover  gives the following collection of results of \cite[Theorems 3.4 and 4.5]{ShareshianWachs2020}

\begin{cor}[cf. Shareshian and Wachs \cite{ShareshianWachs2020}, Athanasiadis \cite{AthanasiadisGamma}] \label{cor:DecoratedPerms}
For positive integer $n\ge 1$, we have 
\begin{align*}
    \widetilde{Q}_n(\x,t)
    &=\sum_{R\in\Stab([n-1])}s_{H_{R,n}}t^{|R|}(1+t)^{n-2|R|}\\
    &=\sum_{w\in NDD_n}\x_w t^{\des(w)}(1+t)^{n-2\des(w)}\\
    &=\sum_{k=0}^{\lfloor\frac{n}{2}\rfloor}\widetilde{\xi}_{n,k}(\x) t^k(1+t)^{n-2k},
\end{align*} 
where 
\[
    \widetilde{\xi}_{n,k}(\x)=\sum_{\substack{R\in\Stab([n-1])\\ |R|=k}}s_{H_{R,n}}=\sum_{\substack{w\in NDD_n \\ \des(w)=k}}\x_w.
\]
And 
\[
    \widetilde{A}_n(q,t)=\sum_{k=0}^{\lfloor\frac{n}{2}\rfloor}\widetilde{\xi}_{n,k}(q)t^k(1+t)^{n-2k},
\]
where
\[
    \widetilde{\xi}_{n,k}(q)=\sum_{\sigma}q^{\maj(\sigma^{-1})}=\sum_{\sigma}q^{\inv(\sigma)}
\]
and both of the sums run through $\sigma\in\S_n$ with $k$ descents and no double descents.
\end{cor}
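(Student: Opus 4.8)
The plan is to derive Corollary \ref{cor:DecoratedPerms} from Corollary \ref{cor:AugUniform} applied to the uniform matroid $U_{n,n}$, using the dictionary between augmented Chow rings of uniform matroids and Eulerian quasisymmetric functions that is already recorded in the excerpt. First I would specialize Corollary \ref{cor:AugUniform} to the case $r=n$: since $U_{n,n}$ has rank $n$, the first identity gives
\[
    \grFrob(\widetilde{A}(U_{n,n})_{\mathbb{C}},t)=\sum_{R\in\Stab([n-1])}s_{H_{R,n}}\,t^{|R|}(1+t)^{n-2|R|},
\]
and the second gives the grouped form $\sum_{k}\bigl(\sum_{|R|=k}s_{H_{R,n}}\bigr)t^k(1+t)^{n-2k}$. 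By the identity \eqref{eq:DecoratedPerms}, the left-hand side is exactly $\widetilde{Q}_n(\x,t)$, which immediately yields the first and third displayed expressions in the corollary, with $\widetilde{\xi}_{n,k}(\x)=\sum_{R\in\Stab([n-1]),\,|R|=k}s_{H_{R,n}}$.

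Next I would pass from ribbon Schur functions to fundamental quasisymmetric functions in order to obtain the $NDD_n$ expression. The point is that $s_{H_{R,n}}=\sum_{\DES(w)=R}\x_w$ over $w\in\mathbb{P}_n$, as recorded in Theorem \ref{thm:RankHomologyBn}(ii). Summing over all $R\in\Stab([n-1])$ (equivalently, all $w\in\mathbb{P}_n$ with $\DES(w)\in\Stab([n-1])$, i.e. no double descents, which is precisely the defining condition of $NDD_n$) and tracking $|R|=\des(w)$, I get
\[
    \sum_{R\in\Stab([n-1])}s_{H_{R,n}}t^{|R|}(1+t)^{n-2|R|}=\sum_{w\in NDD_n}\x_w\,t^{\des(w)}(1+t)^{n-2\des(w)},
\]
and reindexing by $k=\des(w)$ gives $\widetilde{\xi}_{n,k}(\x)=\sum_{w\in NDD_n,\,\des(w)=k}\x_w$, completing the symmetric function statement.

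Finally, for the $q$-analog I would apply the operator $\prod_{i=1}^n(1-q^i)\ps(-)$ to both sides. On the left, by \eqref{eq:DecoratedPerms} and \eqref{eq:InterpBinomEulerQuas} this operator sends $\widetilde{Q}_n(\x,t)$ to $\widetilde{A}_n(q,t)$ (this is Lemma 6.20 of \cite{Liao2023One}, as cited in the excerpt). On the right, the operator is applied term-by-term; the specialization formula $\prod_{i=1}^n(1-q^i)\ps(s_{H_{R,n}})=\sum_{\DES(\sigma)=R}q^{\maj(\sigma^{-1})}=\sum_{\DES(\sigma)=R}q^{\inv(\sigma)}$ (stated in the excerpt before Corollary \ref{cor:ChowUniform}, from \cite{ShareshianWachs2020}) converts each $\widetilde{\xi}_{n,k}(\x)$ into $\widetilde{\xi}_{n,k}(q)=\sum_\sigma q^{\maj(\sigma^{-1})}=\sum_\sigma q^{\inv(\sigma)}$ with $\sigma$ ranging over permutations in $\S_n$ with exactly $k$ descents and $\DES(\sigma)\in\Stab([n-1])$, i.e. $k$ descents and no double descents. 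This gives $\widetilde{A}_n(q,t)=\sum_k\widetilde{\xi}_{n,k}(q)t^k(1+t)^{n-2k}$. The only real subtlety — and the step I would be most careful about — is the bookkeeping on exponents: matching the ``$+1$'' shift in the definition \eqref{eq:InterpBinomEulerQuas} of $\widetilde{Q}_n$ (the $t^{\exc(\sigma)+1}$) against the $t^{|R|}$ weighting on the ribbon side, and confirming that the range of $k$ is $0\le k\le \lfloor n/2\rfloor$ (which follows since $\Stab([n-1])$ has no subset of size exceeding $\lceil (n-1)/2\rceil\le\lfloor n/2\rfloor$ and the binomial exponent $n-2k$ stays nonnegative); everything else is a direct substitution into results already proved.
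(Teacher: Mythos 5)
Your proposal is correct and is exactly the route the paper takes: the paper states Corollary \ref{cor:DecoratedPerms} as an immediate consequence of combining equation \eqref{eq:DecoratedPerms} with Corollary \ref{cor:AugUniform} at $r=n$, using Theorem \ref{thm:RankHomologyBn}(ii) for the ribbon-to-$NDD_n$ rewriting and the stable principal specialization (together with Lemma~6.20 of \cite{Liao2023One}) for the $q$-analog. Your bookkeeping remarks are sound (note $\lceil(n-1)/2\rceil=\lfloor n/2\rfloor$ exactly, so the stated range of $k$ is tight), and the ``$+1$'' shift you flag is already absorbed into the cited identity \eqref{eq:DecoratedPerms} and requires no separate verification here.
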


\section{A problem proposed by Athanasiadis} \label{sec:AthanasiadisProblem}
In this section, we apply some results from Section \ref{sec:Uniform} to address a problem proposed by Athanasiadis \cite[Problem 2.5]{AthanasiadisGamma}.

For $\sigma\in\S_n$, we set  
\[
    \des^*(\sigma)=\begin{cases}
        \des(\sigma) & \text{ if }\sigma_1=1\\
        \des(\sigma)-1  & \text{ if }\sigma_1>1
    \end{cases}.
\]
Recall the following $(p,q)$-analog of the $\gamma$-expansion of the Eulerian polynomial in \cite[Theorem 2.3]{AthanasiadisGamma}.
\begin{thm} \label{thm:pqGammaPerm}
For all positive integer $n\ge 1$, 
\[
    \sum_{\sigma\in\S_n} p^{\des^*(\sigma)}q^{\maj(\sigma)-\exc(\sigma)}t^{\exc(\sigma)}=\sum_{k=0}^{\lfloor\frac{n-1}{2}\rfloor}\xi_{n,k}(p,q)t^{k}(1+t)^{n-1-2k}
\]    
where 
\[
    \xi_{n,k}(p,q)=\sum_{\sigma}p^{\des(\sigma^{-1})}q^{\maj(\sigma^{-1})}
\]
summing over permutations $\sigma\in\S_n$ with $k$ descents and no double descents, no last descent.
\end{thm}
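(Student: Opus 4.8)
The plan is to deduce Theorem \ref{thm:pqGammaPerm} by applying a single two-parameter specialization to both sides of the symmetric-function identity underlying Corollary \ref{cor:Permutations}. On a quasisymmetric function $f$ that is homogeneous of degree $n$, define the linear operation
\[
    \Theta_{p,q}(f)\;\coloneqq\;\Bigl(\prod_{i=0}^{n}(1-pq^{i})\Bigr)\sum_{k\ge 0}p^{k}\,f(1,q,q^{2},\ldots,q^{k}),
\]
where $f(1,q,\ldots,q^{k})$ is the principal specialization in the first $k+1$ variables; this is a ``$p$-averaged'' principal specialization, and although the defining sum is a priori only a power series in $p$, it evaluates to a polynomial on each homogeneous component. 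Recall (in the conventions of Section \ref{sec:Uniform}) that $Q_n(\x,t)=\sum_{\sigma\in\S_n}F_{\DEX(\sigma),n}(\x)\,t^{\exc(\sigma)}$, while Corollary \ref{cor:Permutations} gives $Q_n(\x,t)=\sum_{R\in\Stab([n-2])}s_{H_{R,n}}\,t^{|R|}(1+t)^{n-1-2|R|}$. I would apply $\Theta_{p,q}$ to both expressions for $Q_n(\x,t)$ and match the outcomes with the two sides of Theorem \ref{thm:pqGammaPerm}.

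For the fundamental quasisymmetric functions, the needed evaluation is
\[
    F_{S,n}(1,q,\ldots,q^{k})\;=\;q^{\sum_{s\in S}s}\;\qbin{n+k-|S|}{n},
\]
which comes from the elementary bijection absorbing the $|S|$ forced strict steps of a monotone word of length $n$; summing against $p^{k}$ and using the $q$-binomial series $\sum_{k\ge 0}\qbin{n+k}{n}p^{k}=\prod_{i=0}^{n}(1-pq^{i})^{-1}$ gives $\Theta_{p,q}(F_{S,n})=p^{|S|}q^{\sum_{s\in S}s}$. Therefore
\[
    \Theta_{p,q}\bigl(Q_n(\x,t)\bigr)=\sum_{\sigma\in\S_n}p^{|\DEX(\sigma)|}\,q^{\sum_{s\in\DEX(\sigma)}s}\,t^{\exc(\sigma)},
\]
and invoking the Shareshian and Wachs identities $\sum_{s\in\DEX(\sigma)}s=\maj(\sigma)-\exc(\sigma)$ and $|\DEX(\sigma)|=\des^{*}(\sigma)$, this is exactly $\sum_{\sigma\in\S_n}p^{\des^{*}(\sigma)}q^{\maj(\sigma)-\exc(\sigma)}t^{\exc(\sigma)}$, the left-hand side of Theorem \ref{thm:pqGammaPerm}.

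For the ribbon Schur functions I would run the parallel computation starting from $s_{H_{R,n}}=\sum_{w\in\mathbb{P}_n,\,\DES(w)=R}\x_w$ (Theorem \ref{thm:RankHomologyBn}(ii)): every word $w$ with $\DES(w)=R$ standardizes to a permutation $\sigma$ with $\DES(\sigma)=R$, the words mapping to $\sigma$ under standardization are parametrized by a monotone chain of length $n$ whose forced strict steps are indexed by $\DES(\sigma^{-1})$, and hence $s_{H_{R,n}}(1,q,\ldots,q^{k})=\sum_{\sigma:\DES(\sigma)=R}q^{\maj(\sigma^{-1})}\,\qbin{n+k-\des(\sigma^{-1})}{n}$ --- this is precisely the computation that yields the specialization identity used in Section \ref{sec:Uniform}, carried out with the extra variable $p$. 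Averaging against $p^{k}$ as before gives $\Theta_{p,q}(s_{H_{R,n}})=\sum_{\sigma:\DES(\sigma)=R}p^{\des(\sigma^{-1})}q^{\maj(\sigma^{-1})}$, and feeding this into the $\Stab([n-2])$-expansion of $Q_n$ produces $\sum_{k}\xi_{n,k}(p,q)\,t^{k}(1+t)^{n-1-2k}$ with $\xi_{n,k}(p,q)=\sum_{\sigma}p^{\des(\sigma^{-1})}q^{\maj(\sigma^{-1})}$ summed over $\sigma\in\S_n$ with $k$ descents, no double descents, and no last descent --- the right-hand side of Theorem \ref{thm:pqGammaPerm}. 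Equating the two evaluations of $\Theta_{p,q}(Q_n(\x,t))$ finishes the proof.

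Everything here is routine $q$-binomial bookkeeping except the two statistic identities for $\DEX$ used in the second step. The relation $\sum_{s\in\DEX(\sigma)}s=\maj(\sigma)-\exc(\sigma)$ is the one already implicit in the passage from $Q_n$ to $A_n(q,t)$ in Section \ref{sec:Uniform}. The relation $|\DEX(\sigma)|=\des^{*}(\sigma)$ --- equivalently, that the word defining $\DEX(\sigma)$ loses exactly one descent relative to $\sigma$ precisely when $\sigma(1)>1$, that is, when $1$ is an excedance of $\sigma$ --- has to be read off from the definition of $\DEX$ in \cite{Liao2023Two}, and I expect this compatibility check to be the main obstacle. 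With it in hand, applying the same $\Theta_{p,q}$ to Corollary \ref{cor:DecoratedPerms} should analogously yield the $(p,q)$-refinement of the $\gamma$-expansion of the binomial Eulerian polynomial sought in Athanasiadis's Problem 2.5.
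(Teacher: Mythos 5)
The paper does not prove Theorem \ref{thm:pqGammaPerm} itself; it is quoted from Athanasiadis \cite[Theorem 2.3]{AthanasiadisGamma}, and the paper uses it only as the template for its own new $(p,q)$-analog of $\widetilde{A}_n(t)$ at the end of Section \ref{sec:AthanasiadisProblem}. Your argument is correct and runs on exactly the rails the paper lays down there: your $\Theta_{p,q}$ is $\prod_{i=0}^n(1-pq^i)\sum_{m\ge 1}\psm(\cdot)\,p^{m-1}$, applied to the $F_{\DEX(\sigma),n}$-expansion of $Q_n(\x,t)$ on one side (via (\ref{eq:psmF}) together with the $\DEX$-statistics identities, which the paper subsumes under Lemma 6.20 of \cite{Liao2023One}) and to the ribbon-Schur $\gamma$-expansion of Corollary \ref{cor:Permutations} on the other. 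The only organizational difference is that you specialize $s_{H_{R,n}}$ directly from its $\sum_{w:\DES(w)=R}\x_w$ presentation, whereas the paper's argument first expands in regular Schur functions with coefficients $N_\lambda(t)$, uses Theorem \ref{thm:SchurCoeff} and (\ref{eq:psmSchur}), and then applies RSK; both routes collapse to the same permutation sum after RSK. Two small points worth tightening: the standardization parametrization as you set it up gives $q^{\comaj(\sigma^{-1})}$ term-by-term rather than $q^{\maj(\sigma^{-1})}$ (the summed generating function over $\{\sigma:\DES(\sigma)=R\}$ is invariant under $\maj\leftrightarrow\comaj$ on $\sigma^{-1}$, so your stated totals are correct, but the intermediate formula as written is not literally what the bijection produces), and the identity $|\DEX(\sigma)|=\des^*(\sigma)$ that you flag does require checking against the definition of $\DEX$ — it holds and is recorded in the Shareshian--Wachs sources the paper cites, and it is needed exactly where you invoke it.
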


Recall a similar $q$-analog of the $\gamma$-expansion of the binomial Eulerian polynomial $\widetilde{A}_n(t)$ found by Shareshian and Wachs \cite{ShareshianWachs2020} is given in Corollary \ref{cor:DecoratedPerms}. Athanasiadis proposed the following problem.
\begin{prob}[{\cite[Problem 2.5]{AthanasiadisGamma}}]
Find a $p$-analog of the $q$-polynomial in Corollary \ref{cor:DecoratedPerms} that is similar to Theorem \ref{thm:pqGammaPerm}.   
\end{prob}

The idea of finding such a $(p,q)$-analog of $\widetilde{A}_n(t)$ is similar to the proof of Theorem \ref{thm:pqGammaPerm}. 
But the key step is the $\S_n$-equivariant enumerator of decorated permutations in  (\ref{eq:InterpBinomEulerQuas})\footnote{This interpretation is proved in Liao \cite[Theorem 6.21]{Liao2023One}. The original definition given by Shareshian and Wachs \cite{ShareshianWachs2020} is $\widetilde{Q}_n(\x,t)\coloneqq h_n+t\sum_{k=1}^n h_{n-k} Q_k(\x,t)$.}.

Before answering Athanasiadis' question, let us first generalize Corollary 2.41 in \cite{AthanasiadisGamma}, which states the $\gamma$-positivity for the coefficient of Schur-expansions of $Q_n^0(\x,t)$ and $Q_n(\x,t)$. Note that $Q_n$ and $Q_n^0$ are related to the graded Frobenius series of the Chow ring of $U_{n,n}$ and $U_{n-1,n}$ respectively. Here we generalize these results to all uniform matroids $U_{r,n}$ and their augmented counterparts. 

Let
\[
\grFrob(A(U_{r,n})_{\mathbb{C}},t)=\sum_{\lambda\vdash n} P_{\lambda}^r(t)s_{\lambda} \text{ and }\grFrob(\widetilde{A}(U_{r,n})_{\mathbb{C}},t)=\sum_{\lambda\vdash n}\widetilde{P_\lambda^r}(t)s_{\lambda}.
\]
\begin{cor}\label{cor:UniformSchurCoeff}
For $\lambda\vdash n$, we have
\[
    P_{\lambda}^r(t)=\sum_{\substack{P\in SYT(\lambda)\\ \DES(P)\in\Stab([2,r-1])}} t^{\des(P)}(1+t)^{r-1-2\des(P)}=\sum_{k=0}^{\lfloor\frac{r-1}{2}\rfloor}\xi_{r,\lambda,k}~ t^k(1+t)^{r-1-2k}
\]
and 
\[
    \widetilde{P}_{\lambda}^r(t)=\sum_{\substack{P\in SYT(\lambda)\\ \DES(P)\in\Stab([r-1])}} t^{\des(P)}(1+t)^{r-2\des(P)}=\sum_{k=0}^{\lfloor\frac{r}{2}\rfloor}\widetilde{\xi}_{r,\lambda,k}~ t^k(1+t)^{r-2k}
\]
where $\xi_{r,\lambda,k}$ (respectively, $\widetilde{\xi}_{r,\lambda,k}$) is the number of tableaux $P\in SYT(\lambda)$ for which $\DES(P)\in\Stab([2,r-1])$ (respectively, $\Stab([r-1])$) has $k$ elements. In particular, $P_{\lambda}^r(t)$ and $\widetilde{P}_{\lambda}^r(t)$ are palindromic and unimodal for all $\lambda$ and $r$.
\end{cor}
\begin{proof}
Consider the non-augmented case. Apply Corollary \ref{cor:ChowUniform} and use Theorem \ref{thm:BetaGenRep} (i) to expand $s_{H_{R,n}}$ into Schur functions of regular shapes. We have
\[
    \grFrob(A(U_{r,n})_{\mathbb{C}},t)=\sum_{\lambda\vdash n}\left(\sum_{R\in\Stab([2,r-1])}b_{\lambda}(R)t^{|R|}(1+t)^{n-2|R|}\right)s_{\lambda}(\x).
\]
Comparing the coefficient of $s_{\lambda}(\x)$, we obtain
\[
    P_{\lambda}^r(t)=\sum_{R\in\Stab([2,r-1])}b_{\lambda}(R)t^{|R|}(1+t)^{r-1-2|R|}
    =\sum_{\substack{P\in SYT(\lambda)\\ \DES(P)\in\Stab([2,r-1])}}t^{\des(P)}(1+t)^{r-1-2\des(P)}.
\]
The augmented can be worked out similarly. 
\end{proof}

\begin{rem}
For general $1\le r\le n$, the symmetric function polynomials $\grFrob(A(U_{r,n})_{\mathbb{C}},t)$ and $\grFrob(\widetilde{A}(U_{r,n})_{\mathbb{C}},t)$ each has two difference expressions as shown in \cite[Theorem 4.1 and Theorem 4.3]{Liao2023Two}. Unfortunately, we do not have nice combinatorial expressions as the cases $r=n-1,~n$ in  (\ref{eq:InterpDerEulerQuas}) and (\ref{eq:InterpBinomEulerQuas}).   
\end{rem}
In particular, when $r=n$ in the augmented case of Corollary \ref{cor:UniformSchurCoeff}, it states that the Schur-expansion of the binomial Eulerian quasisymmetric function $\widetilde{Q}_n(\x,t)$ has $\gamma$-positive coefficients. This will be used in the proof of Theorem \ref{thm:pqAnalog}.

We used stable principal specialization to obtain many interesting $q$-analogs in the previous sections. To obtain a $(p,q)$-analog of $\widetilde{A}_n(t)$, we need a different specialization. Given a formal power series $f(\x)=f(x_1,x_2,\ldots)$ and a positive integer $m\ge 1$, the \emph{principal specialization} $\psm(f)$ of $f$ is obtain from $f$ by setting 
\[
    x_i=\begin{cases}
        q^{i-1} & \text{ if }1\le i\le m\\
        0       & \text{ if }i\ge m+1
    \end{cases}.
\]
Then the principal specialization of $F_{S,n}$ (see \cite[Lemma 5.2]{Gessel1993cycle}) and $s_{\lambda}(\x)$ (see \cite[Proposition 7.19.12]{StanleyEC2}) are given by the formulas
\begin{equation} \label{eq:psmF}
    \sum_{m\ge 1}\psm(F_{S,n}(\x))p^{m-1}=\frac{p^{|S|}q^{\sum_{i\in S}i}}{(1-p)(1-pq)\ldots(1-pq^n)}
\end{equation}
and
\begin{equation} \label{eq:psmSchur}
    \sum_{m\ge 1}\psm(s_{\lambda}(\x))p^{m-1}=\frac{\sum_{P\in SYT(\lambda)}p^{\des(P)}q^{\maj(P)}}{(1-p)(1-pq)\ldots(1-pq^n)}.
\end{equation}

For any decorated permutation $\sigma\in\widetilde{\S}_n$, we set 
\[
    \widetilde{\des}(\sigma)=\begin{cases}
        \des(\sigma) & \text{ if }\sigma_1=0 \text{ or }1\\
        \des(\sigma)-1 & \text{ if }\sigma_1>1
    \end{cases}.
\]

Now we are ready to state our $(p,q)$-analog for $\widetilde{A}_n(t)$.
\begin{thm} \label{thm:pqAnalog}
For a positive integer $n\ge 1$,
\begin{align*}
    \sum_{\sigma\in\widetilde{\S}_n}p^{\widetilde{\des}(\sigma)}q^{\maj(\sigma)-\exc(\sigma)}t^{\exc(\sigma)+1}
    &=\sum_{\substack{\sigma\in\S_n \\ \DES(\sigma)\in\Stab([n-1])}}p^{\des(\sigma^{-1})}q^{\maj(\sigma^{-1})}t^{\des(\sigma)}(1+t)^{n-2\des(\sigma)}\\
    &=\sum_{k=0}^{\lfloor\frac{n}{2}\rfloor}\widetilde{\xi}_{n,k}(p,q) t^k(1+t)^{n-2k}
\end{align*}
where 
\[
    \widetilde{\xi}_{n,k}(p,q)=\sum_{\sigma}p^{\des(\sigma^{-1})}q^{\maj(\sigma^{-1})}
\]
and the sum runs through $\sigma\in\S_n$ with $k$ descents and no double descents.
\end{thm}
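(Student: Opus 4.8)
The plan is to prove the three-way identity by threading together the combinatorial interpretation of $\widetilde{Q}_n(\x,t)$ from~\eqref{eq:InterpBinomEulerQuas}, the Schur-coefficient formula of Theorem~\ref{thm:SchurCoeff}, and the principal-specialization formulas~\eqref{eq:psmF}--\eqref{eq:psmSchur}, exactly as in Athanasiadis's derivation of Theorem~\ref{thm:pqGammaPerm}. The middle expression is, by definition, $\sum_{k}\widetilde{\xi}_{n,k}(p,q)\,t^k(1+t)^{n-2k}$, so the real content is the first equality: the $(p,q)$-enumerator of decorated permutations on the left equals the $(p,q)$-enumerator of no-double-descent permutations in the middle.

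First I would expand $\widetilde{Q}_n(\x,t)$ in two ways. On one hand, \eqref{eq:InterpBinomEulerQuas} gives $\widetilde{Q}_n(\x,t)=\sum_{\sigma\in\widetilde{\S}_n}F_{\DEX(\sigma),n}(\x)\,t^{\exc(\sigma)+1}$; applying $\sum_{m\ge1}\psm(-)\,p^{m-1}$ and using~\eqref{eq:psmF} clears the denominator $\prod_{i=0}^n(1-pq^i)$ and produces $\sum_{\sigma\in\widetilde{\S}_n} p^{|\DEX(\sigma)|} q^{\sum_{i\in\DEX(\sigma)}i}\,t^{\exc(\sigma)+1}$. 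The standard identities relating $\DEX$ to $\maj$ and $\des$ on decorated permutations — the same ones underlying Corollary~\ref{cor:DecoratedPerms} and~\cite{Liao2023One} — rewrite this as $\sum_{\sigma\in\widetilde{\S}_n} p^{\widetilde{\des}(\sigma)}q^{\maj(\sigma)-\exc(\sigma)}t^{\exc(\sigma)+1}$, which is the left-hand side. (Here the definition of $\widetilde{\des}$, with its special treatment of $\sigma_1\in\{0,1\}$, is precisely what is needed to match the exponent of $p$ coming from $|\DEX(\sigma)|$; verifying this bookkeeping is a finite check on the structure of $\DEX$ over $\widetilde{\S}_n$.) On the other hand, write $\widetilde{Q}_n(\x,t)=\sum_{\lambda\vdash n}N_\lambda(t)\,s_\lambda(\x)$, apply the same specialization operator using~\eqref{eq:psmSchur}, and clear the same denominator to get $\sum_{\lambda\vdash n}N_\lambda(t)\sum_{P\in SYT(\lambda)}p^{\des(P)}q^{\maj(P)}$. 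Substituting the formula for $N_\lambda(t)$ from Theorem~\ref{thm:SchurCoeff} and regrouping tableaux by their shape, this becomes $\sum_{P}p^{\des(P)}q^{\maj(P)}\,t^{\des(P')}(1+t)^{n-2\des(P')}$ where the inner tableau $P'$ ranges over $SYT(\lambda)$ with $\DES(P')\in\Stab([n-1])$; via the RSK/standardization bijection $\sigma\mapsto(P,Q)$ with $\DES(\sigma)=\DES(Q)$ and $\DES(\sigma^{-1})=\DES(P)$, together with $\maj(P)=\maj(\sigma^{-1})$, $\des(P)=\des(\sigma^{-1})$, this sum collapses to $\sum_{\sigma:\DES(\sigma)\in\Stab([n-1])}p^{\des(\sigma^{-1})}q^{\maj(\sigma^{-1})}t^{\des(\sigma)}(1+t)^{n-2\des(\sigma)}$, the middle expression. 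Equating the two specializations of $\widetilde{Q}_n(\x,t)$ proves the theorem.

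The main obstacle will be the first bullet above: matching the $p$-statistic $\widetilde{\des}$ on decorated permutations with the $|\DEX|$-statistic that naturally falls out of~\eqref{eq:psmF}. This requires unwinding the definition of $\DEX$ on $\widetilde{\S}_n$ from~\cite{Liao2023One,ShareshianWachs2020}, checking that $|\DEX(\sigma)|$ equals $\widetilde{\des}(\sigma)$ (not merely $\des(\sigma)$), and confirming that $q^{\sum_{i\in\DEX(\sigma)}i}=q^{\maj(\sigma)-\exc(\sigma)}$ — the analogue for $\widetilde{\S}_n$ of the classical fact for $\S_n$ used in Theorem~\ref{thm:pqGammaPerm}. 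Everything else is either a formal manipulation of generating functions with the common denominator $\prod_{i=0}^n(1-pq^i)$, or an invocation of RSK with its well-known descent-set compatibilities; no new combinatorial model beyond $NDD_n$ and $\widetilde{\S}_n$ is needed.
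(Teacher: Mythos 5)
Your proposal follows the paper's proof essentially step for step: specialize the decorated-permutation expansion~\eqref{eq:InterpBinomEulerQuas} via~\eqref{eq:psmF} to obtain the left-hand side (with the $\DEX$-to-$(\widetilde{\des},\maj-\exc)$ translation being exactly what the paper delegates to Lemma~6.20 of \cite{Liao2023One}), specialize the Schur expansion via~\eqref{eq:psmSchur} and Theorem~\ref{thm:SchurCoeff} to get the middle expression, and identify pairs of standard Young tableaux with permutations via RSK. The only cosmetic difference is that you make the intermediate quantities $p^{|\DEX(\sigma)|}q^{\sum_{i\in\DEX(\sigma)}i}$ explicit before invoking the cited lemma, whereas the paper cites the lemma directly; this does not change the argument.
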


\begin{proof}
 First applying principal specialization on (\ref{eq:InterpBinomEulerQuas}) and by (\ref{eq:psmF}), we have
\begin{align*}
    &\sum_{m\ge 1}\psm(\widetilde{Q}_n(\x,t))p^{m-1}\\
    =&\sum_{\sigma\in\widetilde{\S}_n}\sum_{m\ge 1}\psm(F_{\DEX(\sigma),n}(\x))p^{m-1}t^{\exc(\sigma)+1}\\
    =&\frac{\sum_{\sigma\in\widetilde{\S}_n}p^{\widetilde{\des}(\sigma)}q^{\maj(\sigma)-\exc(\sigma)}t^{\exc(\sigma)+1}}{(1-p)(1-pq)\ldots(1-pq^n)} \quad  \text{(by Lemma 6.20 in \cite{Liao2023One})}.
\end{align*}
On the other hand, 
\begin{align*}
    &\sum_{m\ge 1}\psm(\widetilde{Q}_n(\x,t))p^{m-1}\\
    =&\sum_{\lambda\vdash n} \widetilde{P}_{\lambda}^{n}(t)\sum_{m\ge 1}\psm(s_{\lambda}(\x))p^{m-1}\\
    =&\frac{\sum_{\lambda\vdash n} \widetilde{P}_{\lambda}^{n}(t)\sum_{P\in SYT(\lambda)}p^{\des(P)}q^{\maj(P)}}{(1-p)(1-pq)\ldots(1-pq^n)} \quad \text{(by (\ref{eq:psmSchur}))}.
\end{align*}
Therefore, we have 
\begin{align*}
    &\sum_{\sigma\in\widetilde{\S}_n}p^{\widetilde{\des}(\sigma)}q^{\maj(\sigma)-\exc(\sigma)}t^{\exc(\sigma)+1}
    =\sum_{\lambda\vdash n}\sum_{P\in SYT(\lambda)} \widetilde{P}_{\lambda}^{n}(t)p^{\des(P)}q^{\maj(P)}\\
    =&\sum_{\lambda\vdash n}\sum_{P\in SYT(\lambda)}\sum_{\substack{Q\in SYT(\lambda)\\ \DES(Q)\in\Stab([n-1])}}p^{\des(P)}q^{\maj(P)} t^{\des(Q)}(1+t)^{n-2\des(Q)} \quad \text{(by Corollary \ref{cor:UniformSchurCoeff})}\\
    =& \sum_{\substack{\sigma\in\S_n\\ \DES(\sigma)\in\Stab([n-1])}}p^{\des(\sigma^{-1})}q^{\maj(\sigma^{-1})}t^{\des(\sigma)}(1+t)^{n-2\des(\sigma)}.
\end{align*}
The last equality is obtained by applying the RSK algorithm described in Section 7.11 of Stanley \cite[Lemma 7.23.1]{StanleyEC2}.
\end{proof}

\section{A remark on braid maroids} \label{sec: Braid}

The \emph{Braid matroids} $K_n$ are the matroid associated with the braid arrangement $\{x_i-x_j=0\}_{1\le i<j\le n}$, and equivalently, the graphical matroid of the complete graph of $n$ vertices. Its lattice of flats is the intersection lattice of the braid arrangement, and is given by the partition lattice $\Pi_n$.
In \cite{Ferroni2024hilbert}, Ferroni, Matherne, Stevens, and Vecchi proposed a problem of studying the Hilbert series of the Chow and augmented Chow ring of $K_n$. Applying Theorem \ref{thm:GammaChow} and Theorem \ref{thm:GammaAug}, we have
\[
    \Hilb_G(A(K_n),t)=\sum_{S\in \Stab([2,n-2])}[\tilde{H}_{|S|-1}(({\Pi_n})_S)] t^{|S|}(1+t)^{n-2-2|S|}
\]
and 
\[
    \Hilb_G(\widetilde{A}(K_n),t)=\sum_{S\in \Stab([n-2])}[\tilde{H}_{|S|-1}(({\Pi_n})_S)] t^{|S|}(1+t)^{n-1-2|S|}.
\]
These give an explicit connection between the (augmented) Chow ring of $K_n$ and the top reduced homology of the rank-selected partition lattice $(\Pi_n)_S$. The study 
of $\S_n$-representation $\tilde{H}_{|S|-1}(({\Pi_n})_S)$ is initiated from Stanley's classical paper \cite{Stanley1982GroupPoset}, and has been studied in depth since then. A nice survey for this is Sundaram \cite{SundaramPartitionSurvey}. However, the $\S_n$-module structures of $\tilde{H}_{|S|-1}(({\Pi_n})_S)$ seem to vary a lot, and there is no unified characterization for every subset $S$ so far. Regarding the dimension (rank) of $\tilde{H}_{|S|-1}(({\Pi_n})_S)$, using the theory of EL-labeling, there is a general formula for all subsets $S$, see Sundaram \cite[Prop. 2.18]{SundaramPartition1994} or Bj\"{o}rner \cite[p.276, Exercise 7.33]{BjornerMatroidChapter}.

\section{New developments} \label{sec: Developments}
Since the preliminary version of this paper was posted on \texttt{arXiv}, there have been many interesting developments. 

\subsection{Chow polynomials for posets}
Ferroni, Matherne, and Vecchi \cite{FMVChowPoly} generalized the notion of Chow polynomials from matroids to locally finite, weakly ranked posets, placing them within the framework of Kazhdan--Lusztig--Stanley theory for posets.

Given such a poset $P$ and a choice of a distinguished element $\kappa$ in the incidence algebra of $P$, called a \emph{$P$-kernel}, they define the corresponding \emph{$\kappa$-Chow polynomial of $P$}.
For suitable choices of $\kappa$, this construction recovers and extends Chow-type invariants arising in the study of Eulerian posets and the Kazhdan--Lusztig theory of Coxeter groups, and allows for a systematic investigation of their positivity phenomena.

In particular, when $\kappa$ is the characteristic polynomial of $P$ and $P$ is a lattice of flats of a matroid $M$, one recovers the Chow polynomial of $M$.
As noted in Remark~\ref{rem: ChowPoset}, using an approach similar to ours, they proved that if $P$ is Cohen--Macaulay, then both the characteristic Chow and augmented Chow polynomials are $\gamma$-positive \cite[Theorem 4.25]{FMVChowPoly}. 
We may extend Theorems \ref{thm:GammaChow} and \ref{thm:GammaAug} to Cohen-Macaulay posets as well.

\begin{defi} \label{def: EqChowDef}
Let $P$ be a finite graded bounded poset of rank $r$, and $G$ be a group of automorphisms of $P$. Define the \defin{($G$-)equivariant Chow polynomial} and \defin{($G$-)equivariant augmented Chow polynomial} of $P$
\footnote{Recursive definitions of $\operatorname{H}_P(t)$ and $^{\mathrm{aug}}\operatorname{H}_P(t)$ analogous to the non-equivariant construction in \cite{Ferroni2024hilbert} are given by Matherne and Nepel (personal communication).},
respectively, to be the following polynomials with coefficients in $R_{\mathbb{C}}(G)$:
\[
    \operatorname{H}_P(t)\coloneqq \sum_{S\subseteq [r-1]}\phi_{S,r}(t)[\alpha_{P}(S)]
\]
and 
\[
    ^{\mathrm{aug}}\operatorname{H}_P(t)\coloneqq \sum_{S\subseteq [r-1]}\psi_{S,r}(t)[\alpha_{P}(S)],
\]
where $\phi_{S,r}(t)$ and $\psi_{S,r}(t)$ are as in Proposition \ref{prop:HilbChow}.
\end{defi}
When $G$ is trivial (or by taking dimensions of $\alpha_P(S)$), following from \cite[Theorem 4.1]{FMVChowPoly} and its left augmented counterpart, the above polynomials specialize to the non-equivariant Chow and augmented Chow polynomials of $P$.
The same arguments as in Theorems~\ref{thm:GammaChow} and \ref{thm:GammaAug} then yield the following extension.

\begin{thm}
Let $P$ be a finite bounded graded poset of rank $r$. 
Then:
\begin{itemize}
    \item[(i)] the equivariant Chow polynomial of $P$ admits the $\gamma$-expansion
    \[
        \operatorname{H}_P(t)=\sum_{S\in\Stab([2,r-1])}[\beta_{P}(S)] t^{|S|}(1+t)^{r-1-2|S|}, \text{ and}
    \]
    \item[(ii)] the equivariant augmented Chow polynomial of P admits $\gamma$-expansion
    \[
    ^{\mathrm{aug}}\operatorname{H}_P(t)=\sum_{S\in\Stab([r-1])}[\beta_{P}(S)] t^{|S|}(1+t)^{r-2|S|}.
    \]
\end{itemize}
In particular, if $P$ is a Cohen-Macaulay poset, then $\operatorname{H}_P(t)$ and $^{\mathrm{aug}}\operatorname{H}_P(t)$ are equivariant $\gamma$-positive.
\end{thm}

Along these lines, Ferroni, Matherne, and Vecchi further extend Conjecture \ref{conj: Real-rootedChow} to this borader setting. 
\begin{conj}[Conjecture 4.26 in \cite{FMVChowPoly}]
Let $P$ be a Cohen--Macaulay poset. The characteristic Chow polynomial of $P$ is real-rooted.    
\end{conj}
Note that the real-rootedness of the characteristic (left) augmented Chow polynomial of a Cohen--Macaulay poset would follow from \cite[Corollary~4.6]{FMVChowPoly} together with the above conjecture.

\subsection{Uniform matroids $U_{r,n}$ and their generalizations}
The coefficients of the Chow and augmented Chow polynomials of the Boolean matroid $U_{n,n}$ are counted, respectively, by the numbers of loopless Schubert matroids and Schubert matroids of the corresponding ranks. This follows from the bijection described in \cite[proof of Theorem~7.7]{EHL2022stellahedral} between the Feichtner--Yuzvinsky basis $\widetilde{\operatorname{FY}}(U_{n,n})$ and Schubert matroids.
Hoster \cite{HosterUniform2026} extended this combinatorial interpretation to Chow and augmented Chow polynomials of uniform matroids $U_{r,n}$ for all $r$.

Br\"{a}nd\'{e}n and Vecchi \cite{branden2025chow} proved that the Chow polynomials of uniform matroids $U_{r,n}$ are real-rooted, and also provided a new proof of the real-rootedness of the augmented Chow polynomials of $U_{r,n}$, independent of the earlier argument in \cite[Theorem~1.10]{Ferroni2024hilbert}.

Hoster and Stump \cite{HosterStumpRealrootedness2025} used the non-equivariant analogs of Theorems~\ref{thm:GammaChow} and \ref{thm:GammaAug} to prove real-rootedness of Chow and augmented Chow polynomials for posets $\widehat{P}=P\cup\{\hat{1}\}$ where $P$ is a simplicial poset with positive $h$-vector. In particular, their argument yields yet another proof of real-rootedness for uniform matroids.

\subsection{Braid matroids $K_n$}
Kannan and K\"{u}hne \cite{KannanBraid2025} obtained a closed formula for the generating function of the $\mathfrak{S}_n$-equivariant Chow polynomial $\operatorname{H}_{K_n}(t)$ of the braid matroid $K_n$, expressed in terms of the $\mathfrak{S}_n$-equivariant compactly supported Poincar\'e polynomial of $\mathcal{M}_{0,n+1}$. They further showed that the Chow polynomial of $K_n$ agrees with the Hilbert series of the Chow ring of the moduli space of genus-zero relative stable maps to $\mathbb{P}^1$.

\section*{Acknowledgements}
The author thanks Victor Reiner for suggesting the problem of extending the equivariant Charney--Davis results in \cite{Liao2023Two}, which motivated this work, and Sheila Sundaram for helpful discussions on the $\mathfrak{S}_n$-representation structure of $\tilde{H}_{|S|-1}((\Pi_n)_S)$.

\bibliographystyle{alpha}
\bibliography{bibliography}

@article{AHK2018,
  title={Hodge theory for combinatorial geometries},
  author={Adiprasito, Karim and Huh, June and Katz, Eric},
  journal={Annals of Mathematics},
  volume={188},
  number={2},
  pages={381--452},
  year={2018},
  publisher={JSTOR},
  url={https://web.math.princeton.edu/~huh/MatroidHodge.pdf}
}

@article {ANR2023PermRepchow,
    AUTHOR = {Angarone, Robert and Nathanson, Anastasia and Reiner, Victor},
     TITLE = {Chow rings of matroids as permutation representations},
   JOURNAL = {J. Lond. Math. Soc. (2)},
  FJOURNAL = {Journal of the London Mathematical Society. Second Series},
    VOLUME = {111},
      YEAR = {2025},
    NUMBER = {1},
     PAGES = {Paper No. e70039, 36},
      ISSN = {0024-6107,1469-7750},
   MRCLASS = {05B35},
  MRNUMBER = {4839888},
MRREVIEWER = {Brahim\ Chaourar},
       DOI = {10.1112/jlms.70039},
       URL = {https://doi.org/10.1112/jlms.70039},
}

@article {AthanasiadisGamma,
    AUTHOR = {Athanasiadis, Christos A.},
     TITLE = {Gamma-positivity in combinatorics and geometry},
   JOURNAL = {S\'{e}m. Lothar. Combin.},
  FJOURNAL = {S\'{e}minaire Lotharingien de Combinatoire},
    VOLUME = {77},
      YEAR = {[2016--2018]},
     PAGES = {Art. B77i, 64},
      ISSN = {1286-4889},
   MRCLASS = {05E05 (05A05 05A15 05E45 06A11 20B30 20F55)},
  MRNUMBER = {3878174},
MRREVIEWER = {Christian\ Stump},
}

@article {Athanasiadis2020EqGamma,
    AUTHOR = {Athanasiadis, Christos A.},
     TITLE = {Some applications of {R}ees products of posets to equivariant
              gamma-positivity},
   JOURNAL = {Algebr. Comb.},
  FJOURNAL = {Algebraic Combinatorics},
    VOLUME = {3},
      YEAR = {2020},
    NUMBER = {1},
     PAGES = {281--300},
      ISSN = {2589-5486},
   MRCLASS = {05E05 (06A11)},
  MRNUMBER = {4068750},
       DOI = {10.5802/alco.85},
       URL = {https://doi.org/10.5802/alco.85},
}

@incollection {BjornerMatroidChapter,
    AUTHOR = {Bj \"orner, Anders},
     TITLE = {The homology and shellability of matroids and geometric
              lattices},
 BOOKTITLE = {Matroid applications},
    SERIES = {Encyclopedia Math. Appl.},
    VOLUME = {40},
     PAGES = {226--283},
 PUBLISHER = {Cambridge Univ. Press, Cambridge},
      YEAR = {1992},
      ISBN = {0-521-38165-7},
   MRCLASS = {52B40 (05B35 55N99)},
  MRNUMBER = {1165544},
MRREVIEWER = {Michel\ Yves\ Jambu},
       DOI = {10.1017/CBO9780511662041.008},
       URL = {https://doi.org/10.1017/CBO9780511662041.008}
}

@article {BHMPW2020,
    AUTHOR = {Braden, Tom and Huh, June and Matherne, Jacob P. and
              Proudfoot, Nicholas and Wang, Botong},
     TITLE = {A semi-small decomposition of the {C}how ring of a matroid},
   JOURNAL = {Adv. Math.},
  FJOURNAL = {Advances in Mathematics},
    VOLUME = {409},
      YEAR = {2022},
     PAGES = {Paper No. 108646, 49},
      ISSN = {0001-8708,1090-2082},
   MRCLASS = {05B35 (14C25)},
  MRNUMBER = {4477425},
MRREVIEWER = {Joseph\ Kung},
       DOI = {10.1016/j.aim.2022.108646},
       URL = {https://doi.org/10.1016/j.aim.2022.108646},
}

@article{BHMPW2020+,
  title={Singular {H}odge theory for combinatorial geometries},
  author={Braden, Tom and Huh, June and Matherne, Jacob P and Proudfoot, Nicholas and Wang, Botong},
  journal={arXiv preprint arXiv:2010.06088},
  year={2020},
  url={https://arxiv.org/abs/2010.06088}
}

@article{branden2025chow,
  title={Chow polynomials of uniform matroids are real-rooted},
  author={Br{\"a}nd{\'e}n, Petter and Vecchi, Lorenzo},
  journal={arXiv preprint arXiv:2501.07364},
  year={2025}
}

@article {EHL2022stellahedral,
    AUTHOR = {Eur, Christopher and Huh, June and Larson, Matt},
     TITLE = {Stellahedral geometry of matroids},
   JOURNAL = {Forum Math. Pi},
  FJOURNAL = {Forum of Mathematics. Pi},
    VOLUME = {11},
      YEAR = {2023},
     PAGES = {Paper No. e24, 48},
      ISSN = {2050-5086},
   MRCLASS = {14M25 (05B35 14N20 52B40 52C35)},
  MRNUMBER = {4653766},
MRREVIEWER = {Paolo\ Aluffi},
       DOI = {10.1017/fmp.2023.24},
       URL = {https://doi.org/10.1017/fmp.2023.24},
}

@article {FeichtnerYuzvinsky2004,
    AUTHOR = {Feichtner, Eva Maria and Yuzvinsky, Sergey},
     TITLE = {Chow rings of toric varieties defined by atomic lattices},
   JOURNAL = {Invent. Math.},
  FJOURNAL = {Inventiones Mathematicae},
    VOLUME = {155},
      YEAR = {2004},
    NUMBER = {3},
     PAGES = {515--536},
      ISSN = {0020-9910,1432-1297},
   MRCLASS = {14C15 (14M25)},
  MRNUMBER = {2038195},
MRREVIEWER = {G.\ K.\ Sankaran},
       DOI = {10.1007/s00222-003-0327-2},
       URL = {https://doi.org/10.1007/s00222-003-0327-2},
}

@article{FMVChowPoly,
  title={Chow functions for partially ordered sets},
  author={Ferroni, Luis and Matherne, Jacob P and Vecchi, Lorenzo},
  journal={arXiv preprint arXiv:2411.04070},
  year={2024}
}

@article {FerroniSchroter2024,
    AUTHOR = {Ferroni, Luis and Schr\"oter, Benjamin},
     TITLE = {Valuative invariants for large classes of matroids},
   JOURNAL = {J. Lond. Math. Soc. (2)},
  FJOURNAL = {Journal of the London Mathematical Society. Second Series},
    VOLUME = {110},
      YEAR = {2024},
    NUMBER = {3},
     PAGES = {Paper No. e12984, 86},
      ISSN = {0024-6107,1469-7750},
   MRCLASS = {52B40 (05B35 13D40 14T20 52B45)},
  MRNUMBER = {4795885},
MRREVIEWER = {Alex\ McDonough},
       DOI = {10.1112/jlms.12984},
       URL = {https://doi.org/10.1112/jlms.12984},
}

@article {Ferroni2024hilbert,
    AUTHOR = {Ferroni, Luis and Matherne, Jacob P. and Stevens, Matthew and
              Vecchi, Lorenzo},
     TITLE = {Hilbert-{P}oincar\'e{} series of matroid {C}how rings and
              intersection cohomology},
   JOURNAL = {Adv. Math.},
  FJOURNAL = {Advances in Mathematics},
    VOLUME = {449},
      YEAR = {2024},
     PAGES = {Paper No. 109733, 55},
      ISSN = {0001-8708,1090-2082},
   MRCLASS = {05B35 (05A15 05E18 13D40 14C15 30C15 52B40 52B45)},
  MRNUMBER = {4749982},
MRREVIEWER = {Satoshi\ Murai},
       DOI = {10.1016/j.aim.2024.109733},
       URL = {https://doi.org/10.1016/j.aim.2024.109733},
}

@article {Gessel1993cycle,
    AUTHOR = {Gessel, Ira M. and Reutenauer, Christophe},
     TITLE = {Counting permutations with given cycle structure and descent
              set},
   JOURNAL = {J. Combin. Theory Ser. A},
  FJOURNAL = {Journal of Combinatorial Theory. Series A},
    VOLUME = {64},
      YEAR = {1993},
    NUMBER = {2},
     PAGES = {189--215},
      ISSN = {0097-3165,1096-0899},
   MRCLASS = {05A15 (05E05)},
  MRNUMBER = {1245159},
       DOI = {10.1016/0097-3165(93)90095-P},
       URL = {https://doi.org/10.1016/0097-3165(93)90095-P},
}

@article {HameisterRaoSimpson2021,
    AUTHOR = {Hameister, Thomas and Rao, Sujit and Simpson, Connor},
     TITLE = {Chow rings of vector space matroids},
   JOURNAL = {J. Comb.},
  FJOURNAL = {Journal of Combinatorics},
    VOLUME = {12},
      YEAR = {2021},
    NUMBER = {1},
     PAGES = {55--83},
      ISSN = {2156-3527,2150-959X},
   MRCLASS = {05B35},
  MRNUMBER = {4195584},
MRREVIEWER = {Eva\ Ferrara Dentice},
       DOI = {10.4310/JOC.2021.v12.n1.a3},
       URL = {https://doi.org/10.4310/JOC.2021.v12.n1.a3},
}

@article {HosterUniform2026,
    AUTHOR = {Hoster, Elena},
     TITLE = {The {C}how and augmented {C}how polynomials of uniform
              matroids},
   JOURNAL = {Electron. J. Combin.},
  FJOURNAL = {Electronic Journal of Combinatorics},
    VOLUME = {33},
      YEAR = {2026},
    NUMBER = {1},
     PAGES = {Paper No. 1.36, 16},
      ISSN = {1077-8926},
   MRCLASS = {05B35 (05A19)},
  MRNUMBER = {5037346},
       DOI = {10.37236/14313},
       URL = {https://doi.org/10.37236/14313},
}

@article{HosterStumpRealrootedness2025,
  title={Chow polynomials of simplicial posets with positive $ h $-vector are real-rooted},
  author={Hoster, Elena and Stump, Christian},
  journal={arXiv preprint arXiv:2508.15538},
  year={2025}
}

@article{KannanBraid2025,
  title={The $\mathbb{S}_n$-equivariant {C}how polynomial of the Braid matroid},
  author={Kannan, Siddarth and K{\"u}hne, Lukas},
  journal={arXiv preprint arXiv:2504.19829},
  year={2025}
}

@mastersthesis{StevenThesis,
    author = {Stevens, Matthew},
    title = {Realrootedness conjecture in matroid theory},
    type = {Bachelor's Thesis},
    school = {Stanford University},
    year = {2021}
}

@article {Liao2023FPSAC,
    AUTHOR = {Liao, Hsin-Chieh},
     TITLE = {Stembridge codes and {C}how rings},
   JOURNAL = {S\'em. Lothar. Combin.},
  FJOURNAL = {S\'eminaire Lotharingien de Combinatoire},
    VOLUME = {89B},
      YEAR = {2023},
     PAGES = {Art. 88, 12},
      ISSN = {1286-4889},
   MRCLASS = {05E10 (05E14)},
  MRNUMBER = {4659596},
}

@article{Liao2023One,
    title={Stembridge codes, permutahedral varieties, and their extensions},
    author={Liao, Hsin-Chieh},
    journal={arXiv preprint arXiv:2403.10577},
    url={https://arxiv.org/abs/2403.10577},
    year={2024}
}

@article {Liao2023Two,
    AUTHOR = {Liao, Hsin-Chieh},
     TITLE = {Chow rings and augmented {C}how rings of uniform matroids and
              their {$q$}-analogs},
   JOURNAL = {Int. Math. Res. Not. IMRN},
  FJOURNAL = {International Mathematics Research Notices. IMRN},
      YEAR = {2025},
    NUMBER = {13},
     PAGES = {Paper No. rnaf198, 34},
      ISSN = {1073-7928,1687-0247},
   MRCLASS = {05B35 (14C15 52B40)},
  MRNUMBER = {4929420},
MRREVIEWER = {Joseph\ Kung},
       DOI = {10.1093/imrn/rnaf198},
       URL = {https://doi.org/10.1093/imrn/rnaf198},
}

@article {Liao2024EqGamma,
    AUTHOR = {Liao, Hsin-Chieh},
     TITLE = {Equivariant {$\gamma$}-positivity of {C}how rings and
              augmented {C}how rings of matroids},
   JOURNAL = {S\'em. Lothar. Combin.},
  FJOURNAL = {S\'eminaire Lotharingien de Combinatoire},
    VOLUME = {93B},
      YEAR = {2025},
     PAGES = {Art. 72, 12},
      ISSN = {1286-4889},
   MRCLASS = {05B35 (05A05 05E05 05E14 05E18)},
  MRNUMBER = {4972533},
       DOI = {10.1093/imrn/rnaf198},
       URL = {https://doi.org/10.1093/imrn/rnaf198},
}

@article {Mastroeni2022Koszul,
    AUTHOR = {Mastroeni, Matthew and McCullough, Jason},
     TITLE = {Chow rings of matroids are {K}oszul},
   JOURNAL = {Math. Ann.},
  FJOURNAL = {Mathematische Annalen},
    VOLUME = {387},
      YEAR = {2023},
    NUMBER = {3-4},
     PAGES = {1819--1851},
      ISSN = {0025-5831,1432-1807},
   MRCLASS = {16S37 (05B35 05E40 13E10 13H10)},
  MRNUMBER = {4657438},
MRREVIEWER = {L\^e\ Tu\^an\ Hoa},
       DOI = {10.1007/s00208-022-02506-5},
       URL = {https://doi.org/10.1007/s00208-022-02506-5},
}

@article {LinussonShareshianWachs2012,
    AUTHOR = {Linusson, Svante and Shareshian, John and Wachs, Michelle L.},
     TITLE = {Rees products and lexicographic shellability},
   JOURNAL = {J. Comb.},
  FJOURNAL = {Journal of Combinatorics},
    VOLUME = {3},
      YEAR = {2012},
    NUMBER = {3},
     PAGES = {243--276},
      ISSN = {2156-3527,2150-959X},
   MRCLASS = {06A11 (05A05 05A30 05E05 05E45 06A07)},
  MRNUMBER = {3029437},
MRREVIEWER = {Peter\ R. W. McNamara},
       DOI = {10.4310/JOC.2012.v3.n3.a1},
       URL = {https://doi.org/10.4310/JOC.2012.v3.n3.a1},
}

@article {ShareshianWachs2010,
    AUTHOR = {Shareshian, John and Wachs, Michelle L.},
     TITLE = {Eulerian quasisymmetric functions},
   JOURNAL = {Adv. Math.},
  FJOURNAL = {Advances in Mathematics},
    VOLUME = {225},
      YEAR = {2010},
    NUMBER = {6},
     PAGES = {2921--2966},
      ISSN = {0001-8708,1090-2082},
   MRCLASS = {05E05 (05A15 05A30)},
  MRNUMBER = {2728998},
MRREVIEWER = {Kurt\ W.\ Luoto},
       DOI = {10.1016/j.aim.2010.05.009},
       URL = {https://doi.org/10.1016/j.aim.2010.05.009},
}

@article {ShareshianWachs2020,
    AUTHOR = {Shareshian, John and Wachs, Michelle L.},
     TITLE = {Gamma-positivity of variations of {E}ulerian polynomials},
   JOURNAL = {J. Comb.},
  FJOURNAL = {Journal of Combinatorics},
    VOLUME = {11},
      YEAR = {2020},
    NUMBER = {1},
     PAGES = {1--33},
      ISSN = {2156-3527,2150-959X},
   MRCLASS = {05E05 (11B65 11B68 52B20)},
  MRNUMBER = {4015851},
MRREVIEWER = {Ivica\ Martinjak},
       DOI = {10.4310/JOC.2020.v11.n1.a1},
       URL = {https://doi.org/10.4310/JOC.2020.v11.n1.a1},
}

@article {SundaramPartition1994,
    AUTHOR = {Sundaram, Sheila},
     TITLE = {The homology representations of the symmetric group on
              {C}ohen-{M}acaulay subposets of the partition lattice},
   JOURNAL = {Adv. Math.},
  FJOURNAL = {Advances in Mathematics},
    VOLUME = {104},
      YEAR = {1994},
    NUMBER = {2},
     PAGES = {225--296},
}

@incollection {SundaramPartitionSurvey,
    AUTHOR = {Sundaram, Sheila},
     TITLE = {Some problems arising from partition poset homology},
 BOOKTITLE = {The mathematical legacy of {R}ichard {P}. {S}tanley},
     PAGES = {335--352},
 PUBLISHER = {Amer. Math. Soc., Providence, RI},
      YEAR = {2016},
      ISBN = {978-1-4704-2724-5},
   MRCLASS = {20C30 (05E18 06A07)},
  MRNUMBER = {3618043},
}

@article {Solomon1968,
    AUTHOR = {Solomon, Louis},
     TITLE = {A decomposition of the group algebra of a finite {C}oxeter
              group},
   JOURNAL = {J. Algebra},
  FJOURNAL = {Journal of Algebra},
    VOLUME = {9},
      YEAR = {1968},
     PAGES = {220--239},
      ISSN = {0021-8693},
   MRCLASS = {20.80},
  MRNUMBER = {232868},
MRREVIEWER = {T.\ Ono},
       DOI = {10.1016/0021-8693(68)90022-7},
       URL = {https://doi.org/10.1016/0021-8693(68)90022-7},
}

@article {StanleyRlabel1974,
    AUTHOR = {Stanley, Richard P.},
     TITLE = {Finite lattices and {J}ordan-{H}\"older sets},
   JOURNAL = {Algebra Universalis},
  FJOURNAL = {Algebra Universalis},
    VOLUME = {4},
      YEAR = {1974},
     PAGES = {361--371},
      ISSN = {0002-5240,1420-8911},
   MRCLASS = {06A30 (05A99)},
  MRNUMBER = {354473},
MRREVIEWER = {O.\ Frink},
       DOI = {10.1007/BF02485748},
       URL = {https://doi.org/10.1007/BF02485748},
}

@article{Stanley1982GroupPoset,
title = {Some aspects of groups acting on finite posets},
journal = {Journal of Combinatorial Theory, Series A},
volume = {32},
number = {2},
pages = {132-161},
year = {1982},
issn = {0097-3165},
doi = {https://doi.org/10.1016/0097-3165(82)90017-6},
url = {https://www.sciencedirect.com/science/article/pii/0097316582900176},
author = {Richard P Stanley},
abstract = {Let P be a finite poset and G a group of automorphisms of P. The action of G on P can be used to define various linear representations of G, and we investigate how these representations are related to one another and to the structure of P. Several examples are analyzed in detail, viz., the symmetric group Gn acting on a boolean algebra, GLn(q) acting on subspaces of an n-dimensional vector space over GF(q), the hyperoctahedral group Bn acting on the lattice of faces of a cross-polytope, and Gn acting on the lattice Πn of partitions of an n-set. Several results of a general nature are also proved. These include a duality theorem related to Alexander duality, a special property of geometric lattices, the behavior of barycentric subdivision, and a method for showing that certain sequences are unimodal. In particular, we give what seems to be the simplest proof to date that the q-binomial coefficient k+lk has unimodal coefficients.}
}

@book{StanleyEC1,
  title={Enumerative {C}ombinatorics Volume 1},
  author={Stanley, Richard P},
  Publisher={Cambridge studies in advanced mathematics},
  year={2011}
}

@book{StanleyEC2,
  title={Enumerative {C}ombinatorics Volume 2},
  author={Stanley, Richard P},
  publisher={Cambridge studies in advanced mathematics},
  year={1999}
}

@article {StumpChow2024,
    AUTHOR = {Stump, Christian},
     TITLE = {Chow and augmented {C}how polynomials as evaluations of
              {P}oincar\'e-extended {${\bf ab}$}-indices},
   JOURNAL = {Adv. Math.},
  FJOURNAL = {Advances in Mathematics},
    VOLUME = {482},
      YEAR = {2025},
     PAGES = {Paper No. 110618, 13},
      ISSN = {0001-8708,1090-2082},
   MRCLASS = {06A07 (05B35 52C40)},
  MRNUMBER = {4975905},
       DOI = {10.1016/j.aim.2025.110618},
       URL = {https://doi.org/10.1016/j.aim.2025.110618},
}

@article {DorpalenMaglioneStump2023ab,
    AUTHOR = {Dorpalen-Barry, Galen and Maglione, Joshua and Stump,
              Christian},
     TITLE = {The {P}oincar\'e-extended {$\bold{ab}$}-index},
      NOTE = {With an appendix by Ricky Ini Liu},
   JOURNAL = {J. Lond. Math. Soc. (2)},
  FJOURNAL = {Journal of the London Mathematical Society. Second Series},
    VOLUME = {111},
      YEAR = {2025},
    NUMBER = {1},
     PAGES = {Paper No. e70054, 33},
      ISSN = {0024-6107,1469-7750},
   MRCLASS = {06A07 (05E05 52C40)},
  MRNUMBER = {4843584},
MRREVIEWER = {Brigitte\ Servatius},
       DOI = {10.1112/jlms.70054},
       URL = {https://doi.org/10.1112/jlms.70054},
}

@incollection {WachsTool2007,
    AUTHOR = {Wachs, Michelle L.},
     TITLE = {Poset topology: tools and applications},
 BOOKTITLE = {Geometric combinatorics},
    SERIES = {IAS/Park City Math. Ser.},
    VOLUME = {13},
     PAGES = {497--615},
 PUBLISHER = {Amer. Math. Soc., Providence, RI},
      YEAR = {2007},
      ISBN = {978-0-8218-3736-8; 0-8218-3736-2},
   MRCLASS = {06B30 (05E10 52C35 55R80)},
  MRNUMBER = {2383132},
       DOI = {10.1090/pcms/013/09},
       URL = {https://doi.org/10.1090/pcms/013/09},
}

@article {FoataStrehl1974,
    AUTHOR = {Foata, Dominique and Strehl, Volker},
     TITLE = {Rearrangements of the symmetric group and enumerative
              properties of the tangent and secant numbers},
   JOURNAL = {Math. Z.},
  FJOURNAL = {Mathematische Zeitschrift},
    VOLUME = {137},
      YEAR = {1974},
     PAGES = {257--264},
      ISSN = {0025-5874,1432-1823},
   MRCLASS = {20B05},
  MRNUMBER = {347951},
MRREVIEWER = {Stephen\ Tanny},
       DOI = {10.1007/BF01237393},
       URL = {https://doi.org/10.1007/BF01237393},
}

@article {Gal2005,
    AUTHOR = {Gal, \'Swiatos\l aw R.},
     TITLE = {Real root conjecture fails for five- and higher-dimensional
              spheres},
   JOURNAL = {Discrete Comput. Geom.},
  FJOURNAL = {Discrete \& Computational Geometry. An International Journal
              of Mathematics and Computer Science},
    VOLUME = {34},
      YEAR = {2005},
    NUMBER = {2},
     PAGES = {269--284},
      ISSN = {0179-5376,1432-0444},
   MRCLASS = {52B70 (05E25 06A07)},
  MRNUMBER = {2155722},
MRREVIEWER = {Wolfgang\ K\"uhnel},
       DOI = {10.1007/s00454-005-1171-5},
       URL = {https://doi.org/10.1007/s00454-005-1171-5},
}

\end{document}